%% file: main.tex
\documentclass[preprint,12pt]{elsarticle}
\usepackage{amsmath,amssymb,amsthm}
\usepackage[margin=1in]{geometry}
\usepackage{tikz}
\usepackage{float}
\usepackage{comment}
\usepackage{hyperref} 

\newcommand{\R}{\mathbb{R}}
\newcommand{\E}{\mathbb{E}}
\newcommand{\cL}{\mathcal{L}}
\newcommand{\ex}{\operatorname{ex}}
\newcommand{\norm}[1]{\left\lVert #1 \right\rVert}
\newcommand{\bdot}{ \boldsymbol{\cdot} }

\definecolor{crossc}{RGB}{0,130,120}   
\definecolor{intc}{RGB}{190,30,45}     
\definecolor{hc}{RGB}{110,110,120}     
\tikzset{
  vtx/.style={circle,fill=black,inner sep=1.6pt},
  crossE/.style={crossc,thick},
  intE/.style={intc,very thick},
  hE/.style={hc,thick},
}

\theoremstyle{definition}
\newtheorem{theorem}{Theorem}
\newtheorem{lemma}[theorem]{Lemma}
\newtheorem{proposition}[theorem]{Proposition}

\newtheorem{problem}[theorem]{Problem}
\newtheorem{corollary}[theorem]{Corollary}
\newtheorem{observation}[theorem]{Observation}
\newtheorem{remark}[theorem]{Remark}

\theoremstyle{definition}

\begin{document}

  \begin{frontmatter}

    \title{Disconnected graphs and extremal bounds for realizable distance orders}

    \author[label1]{Gerardo L. Maldonado}
    \author[label2]{Leonardo Martínez-Sandoval}
    \author[label3]{Miguel Raggi}
    \author[label4]{Edgardo Roldán-Pensado}

    \affiliation[label1]{organization={Instituto de Matemáticas, UNAM Campus Juriquilla},
      addressline={Blvd. Juriquilla 3001},
      city={Juriquilla},
      postcode={76230},
      state={Querétaro},
      country={Mexico}}
    \affiliation[label2]{organization={Facultad de Ciencias, UNAM},
      addressline={Av. Universidad 3000},
      city={Ciudad de México},
      postcode={04510},
      state={Ciudad de México},
      country={Mexico}}
    \affiliation[label3]{organization={Escuela Nacional de Estudios Superiores, UNAM Campus Morelia},
      addressline={Antigua Carretera a Pátzcuaro 8701, Col. Ex Hacienda San José de la Huerta},
      city={Morelia},
      postcode={58089},
      state={Michoacán},
      country={Mexico}}
    \affiliation[label4]{organization={Centro de Ciencias Matemáticas, UNAM Campus Morelia},
      addressline={Antigua Carretera a Pátzcuaro 8701, Col. Ex Hacienda San José de la Huerta},
      city={Morelia},
      postcode={58089},
      state={Michoacán},
      country={Mexico}}

    \begin{abstract}
      Let $G$ be a graph together with a total order $\prec$ on its edges. We say that $\prec$ is realizable in $\R^d$ if there is a placement of the vertices of $G$ in $\R^d$ such that the Euclidean lengths of the edges induce exactly the order $\prec$. Almendra-Hernández and Martínez-Sandoval proved that every total order on the edges of the complete graph $K_n$ is realizable in $\R^{n-2}$. We show that the same is not true for the disjoint union of two complete graphs: for every $n\geq 3$ there is a total order on the edges of $K_n\sqcup K_n$ that is not realizable in $\R^{n-2}$, but is in $\R^{n-1}$. Surprisingly, the realizability of an order on a disconnected graph is not determined by its restrictions to the connected components. We also study realizability on the real line: we characterize which disjoint unions of two cycles are realizable, and estimate the largest number of edges an $n$-vertex graph can have while all of its edge-orders remain realizable on the line. In general dimension, we show that the largest number of edges of an $n$-vertex graph all of whose edge-orders are realizable in $\R^d$ is $dn+O\!\left(dn/\ln(dn)\right)$.
    \end{abstract}

    \begin{keyword}
      Euclidean distances \sep Total orders \sep Distance geometry \sep Extremal graph theory 
    \end{keyword}

  \end{frontmatter}

  \section{Introduction}\label{sec:intro}

  Several classical problems in discrete geometry ask when metric data can be prescribed to a finite set of points. The Erdős distance problem, for instance, asks how many distinct distances must occur among $n$ points in the plane \cite{GIS2011}, while the theory of unit distance graphs asks which graphs can be drawn in $\R^d$ with all edges of unit length \cite{ALON2014}. In the plane, the latter decision problem is complete for the existential theory of the reals \cite{SCHAEFER2013}.

  Following \cite{AM22,MRR26}, in this paper we are not concerned with the exact values of the distances, but only with their relative order.

  Let $G=(V,E)$ be a graph. Consider an injection $p\colon V\to\R^d$, such that the edge lengths $\norm{p(u)-p(v)}$ with $uv\in E$ are pairwise distinct. Such a placement induces a total order on $E$ by comparing lengths:
  \begin{equation*}
    uv \prec xy \iff \norm{p(u)-p(v)} < \norm{p(x)-p(y)}.
  \end{equation*}
  Given a total order $\prec$ on $E$, we say that $\prec$ is \emph{realizable in $\R^d$} if it is induced by some such placement. The same definition makes sense for any norm on $\R^d$. In this paper we only consider the Euclidean one.

  Two observations are immediate. First, a realization in $\R^d$ is also a realization in $\R^{d+1}$, so realizability can only improve as the dimension grows. Second, if $(G,\prec)$ is realizable in $\R^d$ and $H$ is a subgraph of $G$, then $H$ with the induced order is realizable there as well: the same placement works. Thus, it is natural to associate to every graph $G$ the parameter
  \begin{equation*}
    R(G) = \min\{d : \text{every total order on $E(G)$ is realizable in $\R^d$}\},
  \end{equation*}
  which is always finite (see \cite{AM22}).

  We say that the graph $G$ itself is \emph{realizable in $\R^d$} if every total order on $E(G)$ is, so that $R(G)$ is the least dimension in which $G$ is realizable.

  A natural extremal question goes the other way and asks how many edges a graph on $n$ vertices may have if all of its edge-orders are to remain realizable in a fixed dimension. Accordingly, we set
  \begin{equation*}
    \ex_{\R^d}(n)
    =
    \max\bigl\{|E(G)|\ :\ G\text{ is a graph with }|V(G)|=n\text{ and }R(G)\leq d\bigr\}.
  \end{equation*}
  We give lower and upper bounds for this quantity, on the line in Section~\ref{sec:R1} and in arbitrary dimension in Section~\ref{sec:higher}.

  \subsection{Preliminaries}

  \begin{observation} \label{obs:easy}
    The following elementary operations relate the realizability of a graph to that of a smaller one. We record them here since they give a feeling for the geometry of the problem and single out the graphs for which it is actually hard. In all of them, the deleted or contracted edge is removed from the order, and the order on the surviving edges is kept.
    \begin{itemize}
      \item \emph{Bridges can be erased.} If $e$ is a bridge of $G$, then $(G,\prec)$ is realizable in $\R^d$ if and only if $(G-e,\prec)$ is. Indeed, after deleting $e$ the two sides of the bridge share no edges, so one of them may be translated freely without changing any other edge length. In this way $e$ can be put back with whatever length the order demands.
      \item \emph{Cut vertices can be split.} Suppose $G$ is the union of two subgraphs $G_1$ and $G_2$ whose only common vertex is $v$. Then $(G,\prec)$ is realizable in $\R^d$ if and only if the disjoint union $G_1\sqcup G_2$, with $v$ duplicated and with the same order on the edges, is. Again translations do the job: gluing the two copies of $v$ back together changes no edge length, and an arbitrarily small perturbation keeps the placement injective. Consequently, realizability depends only on the disjoint union of the blocks (the maximal $2$-connected subgraphs) of $G$.
      \item \emph{The smallest edge can be contracted.} Let $e=uv$ be the $\prec$-smallest edge of $G$, and suppose it is not in a triangle in $G$. Consider $G/e$ with the inherited order. If $(G/e,\prec)$ is realizable in $\R^d$, then so is $(G,\prec)$. In a realization of the contraction, split the contracted vertex into two points at distance $\varepsilon$. If $\varepsilon$ is small enough, every strict comparison survives, and $e$ itself is the shortest edge, as required. The converse fails in general. We will usually only contract edges not in triangles without explicitly verifying this hypothesis.
    \end{itemize}
  \end{observation}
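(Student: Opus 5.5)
The three items of Observation~\ref{obs:easy} can be treated separately. In each, the easy direction (realizability of $G$ implies that of the smaller graph) is just the subgraph remark from the introduction. The real content is the reverse construction, which is built from rigid motions, plus a final small perturbation to restore injectivity and distinctness of lengths.

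\emph{Bridges.} Suppose $(G,\prec)$ is realizable. Then $G-e$ is a subgraph of $G$, so it is realizable with the induced order. Conversely, let $p$ realize $(G-e,\prec)$ in $\R^d$, and let $e=ab$ with $a\in C_1$, $b\in C_2$, where $C_1,C_2$ are the vertex sets of the two sides of the bridge. Let $\ell^-<\ell^+$ be the lengths in $p$ of the $\prec$-predecessor and $\prec$-successor of $e$, using $0$ and $\infty$ if either does not exist, and pick a target length $t\in(\ell^-,\ell^+)$. Translate $C_2$ by a vector $w$; no edge of $G-e$ changes length, because such edges lie within $C_1$ or within $C_2$. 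The map $w\mapsto\norm{p(a)-p(b)-w}$ takes every value in $[0,\infty)$, so we can achieve length $t$ for $e$. The placement must also stay injective, so we avoid the finitely many translates $w$ that create a coincidence. Since $t$ is the only constraint on $w$, a small change of $t$ inside the open interval lets us avoid them. This uses $d\ge1$; when $d=1$ the set of admissible $w$ is still an open set minus finitely many points, so this is fine.

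\emph{Cut vertices.} A realization of $G$ gives one of $G_1\sqcup G_2$: place both copies of $v$ at $p(v)$, then translate $G_2$ by a tiny generic vector to restore injectivity. Conversely, given a realization of $G_1\sqcup G_2$ with copies $v_1,v_2$, translate $G_2$ by $p(v_1)-p(v_2)$ and identify the two copies. All edge lengths are preserved, so the order is unchanged. The only possible defect is a coincidence $p(x)=p(y)$ with $x\in G_1$, $y\in G_2$ and $x,y\neq v$. Since there are finitely many strict inequalities among edge lengths, they survive a small perturbation. I will perturb each such colliding vertex in $G_2\setminus\{v\}$ by a tiny generic vector, which removes the collisions without reversing any strict inequality. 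Applying this repeatedly at every cut vertex reduces $G$ to the disjoint union of its blocks; bridges are blocks isomorphic to $K_2$, consistent with the previous item.

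\emph{Contraction.} Take a realization $q$ of $(G/e,\prec)$ and let $w$ be the contracted vertex. Set $p(u)=q(w)$ and $p(v)=q(w)+\varepsilon z$ for a unit vector $z$, and keep every other vertex where $q$ puts it. Each edge of $G$ other than $e$ corresponds to an edge of $G/e$; this is exactly where the hypothesis that $e$ lies in no triangle is needed. Without it, edges $ux$ and $vx$ would merge into a single edge, and their relative order would be uncontrolled. Every such edge changes length by at most $\varepsilon$, by the triangle inequality. Let $\delta$ be the minimum gap between distinct edge lengths in $q$, together with the minimum length. Choosing $\varepsilon<\delta/2$ preserves all comparisons among the other edges, and makes $e$, of length $\varepsilon$, strictly shortest, as required.

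The main obstacle is the bookkeeping of genericity in this last step. Two issues remain: injectivity (choose $z$ so that $p(v)$ avoids the finitely many other points), and ties between the new lengths of $ux$ and $vy$ type edges. Strict comparisons already exclude such ties once $\varepsilon$ is below $\delta/2$. No converse is claimed, so nothing further is needed.
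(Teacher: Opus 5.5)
Your proposal is correct and takes the same approach as the paper. For bridges and cut vertices you translate one side, which changes no edge length, and then perturb generically to restore injectivity; for contraction you split the contracted vertex at distance $\varepsilon<\delta/2$, supplying the quantitative details (the interval $(\ell^-,\ell^+)$, the $\delta/2$ bound, the case $d=1$) that the paper leaves implicit. One harmless slip: your opening claim that the easy direction is always the subgraph remark does not literally apply to the cut-vertex item, since $G_1\sqcup G_2$ is not a subgraph of $G$, but you give the correct translation argument there anyway.
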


  These operations already settle realizability for all forests and cycles. In a forest every edge is a bridge, so every total order on its edges is realizable even on the real line: that is, $R(F)=1$ for every forest $F$ with at least one edge.

  For a cycle $C_k$ with $k\geq 4$, contracting the $\prec$-smallest edge produces $C_{k-1}$, and every order on the three edges of a triangle is easily realizable in $\R$ (the $\prec$-largest edge must simply join the two extreme points). By induction, $R(C_k)=1$ for every $k\geq 3$.

  Complete graphs are much harder and, by the above, they are also the essential case. The problem was introduced by Almendra-Hernández and Martínez-Sandoval \cite{AM22}, where they proved that every total order on the edges of $K_n$ is realizable in $\R^{n-2}$. In our notation, $R(K_n)\leq n-2$ for $n\geq 3$. Since every order on the edges of an $n$-vertex graph extends to an order on the edges of $K_n$, it follows that $R(G)\leq n-2$ for every graph $G$ on $n\geq 3$ vertices. For $n=4$ this is sharp: $K_4$ contains two triangles and so admits orders that are not realizable in $\R$ (Proposition~\ref{prop:two-triangles}), hence $R(K_4)=2$.

  Whether $R(K_n)=n-2$ for every $n$ remains open: the candidate order proposed in \cite{AM22} turned out to be realizable in $\R^{n-3}$ \cite{MRR26}. Equality does hold for $n=5$: as we mention below, there is an order on the edges of $K_5$ that is not realizable in $\R^2$, and hence $R(K_5)=3$. The bipartite case is better understood: for $2\leq n\leq m$, \cite{AM22} showed that $n-1\leq R(K_{n,m})\leq n$, and \cite{MRR26} that $R(K_{n,m})=n$ whenever $n<m$. The case $n=m$ remains open. Since $K_{a,n-a}$ is a subgraph of $K_n$, this also yields the best general lower bound we are aware of for complete graphs, namely $R(K_n)\geq\lfloor(n-1)/2\rfloor$ for $n\geq5$, still about half of $n-2$. Beyond these families, essentially nothing is known, and computing $R(G)$ for a general graph appears to be a difficult problem.

  We turn our attention to disconnected graphs, which, as the cut-vertex reduction shows, arise naturally even if one only cares about connected ones. Here a perhaps unintuitive new phenomenon appears: a total order compares edges from different components, so it is not enough to realize each component separately, the sets of edge lengths must also interleave correctly. The simplest instance already occurs on the real line: every total order on the edges of a triangle is realizable in $\R$, yet some orders on the edges of two disjoint triangles are not (Proposition~\ref{prop:two-triangles}).

  Since $R(K_n)\leq n-2$, it is natural to ask whether every total order on the edges of the disjoint union $K_n\sqcup K_n$ is also realizable in $\R^{n-2}$. Our main result, Theorem~\ref{thm:KnUKn}, states that it is not: for every $n\geq3$ there is a total order on the edges of $K_n\sqcup K_n$ that is not realizable in $\R^{n-2}$, while one more dimension always suffices, so that $R(K_n\sqcup K_n)=n-1>n-2\geq R(K_n)$. This happens even though both components of such an order are, by \cite{AM22}, realizable in $\R^{n-2}$, so the obstruction lies in the interaction between the two components and not in either of them.

  The proof of Theorem~\ref{thm:KnUKn} rests on a classical theorem of Schütte~\cite{SCHUTTE1963}, which bounds from below the ratio between the largest and the smallest distance among $n$ points in $\R^{n-2}$ (Lemma~\ref{lem:spread} below).

  We close this introduction by mentioning three results that we have proved but whose proofs we do not give here, and which we hope to present in the future. The first is that the smallest graph that is not realizable in the plane is the wheel on four spokes, that is, $K_5$ with two disjoint edges removed, and that the order witnessing this is unique up to symmetry. Since that wheel is a subgraph of $K_5$, this is what yields $R(K_5)=3$ above. The second is a companion to the contraction of the $\prec$-smallest edge: the $\prec$-largest edges can sometimes be split off instead, reducing $(G,\prec)$ to the graph left after deleting them together with the multigraph obtained by contracting each remaining component to a point. The third is that on the line the regions of the arrangement of Theorem~\ref{thm:ex1} can be enumerated by elementary means, which turns the general algorithmic observation of Remark~\ref{rmk:algorithmic} into a concrete algorithm listing every order that a given graph realizes in $\R$.

  The paper is organized as follows. Section~\ref{sec:line} is devoted to the real line: in Section~\ref{sec:cyclesR} we characterize which disjoint unions of two cycles are realizable there, and in Section~\ref{sec:R1} we bound the maximum number of edges of an $n$-vertex graph all of whose edge-orders are realizable on the line. Section~\ref{sec:higher} treats the same extremal quantity in arbitrary dimension. Finally, in Sections~\ref{sec:order} and \ref{sec:proof} we describe the orders on the edges of $K_n\sqcup K_n$ and prove Theorem~\ref{thm:KnUKn}.

  \section{The real line}\label{sec:line}

  We begin with the case $d=1$, where the question is already nontrivial. By Observation~\ref{obs:easy} the graphs that matter are the disjoint unions of $2$-connected graphs, and the simplest of these are unions of cycles. We first settle the case of two cycles completely, and we then estimate how many edges an $n$-vertex graph may have if all of its edge-orders are realizable on the line.

  \subsection{Two disjoint cycles}\label{sec:cyclesR}

  We begin with a simple obstruction.

  \begin{proposition}\label{prop:two-triangles}
    If a graph $G$ contains two distinct triangles, then $G$ is not realizable in $\R$.
  \end{proposition}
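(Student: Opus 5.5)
The plan is to exploit a rigid additive relation that every triangle satisfies on the line. If three points $p_1<p_2<p_3$ lie in $\R$, then $|p_3-p_1|=|p_2-p_1|+|p_3-p_2|$. Consequently, in any realization in $\R$ of a triangle, the length of its $\prec$-largest edge equals the sum of the lengths of the other two. Since realizability passes to subgraphs with the induced order, it suffices to exhibit one total order on the edges of the union of the two triangles that is not realizable in $\R$. Any extension of that order to $E(G)$ is then not realizable either.

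Two distinct triangles share at most one edge, so I split into two cases.

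\emph{Case 1: the triangles share no edge.} They may be vertex-disjoint or share one vertex; either way they have six distinct edges. Call them $x_1,x_2,x_3$ in the first triangle and $y_1,y_2,y_3$ in the second, and impose
\[
x_1\prec x_2\prec y_1\prec y_2\prec y_3\prec x_3 .
\]
Write $\ell(\cdot)$ for edge lengths in a putative realization. The largest edges of the two triangles are $x_3$ and $y_3$, so the triangle relation gives
\[
\ell(x_3)=\ell(x_1)+\ell(x_2)<\ell(y_1)+\ell(y_2)=\ell(y_3),
\]
where the strict inequality uses $\ell(x_1)<\ell(y_1)$ and $\ell(x_2)<\ell(y_2)$. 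This contradicts $y_3\prec x_3$.

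\emph{Case 2: the triangles share an edge $e$.} Write the triangles as $\{e,a,b\}$ and $\{e,c,d\}$, with five distinct edges in total, and impose
\[
e\prec a\prec c\prec d\prec b .
\]
The largest edges are $b$ and $d$, so $\ell(b)=\ell(e)+\ell(a)<\ell(e)+\ell(c)=\ell(d)$. This contradicts $d\prec b$.

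There is no real obstacle here. The only points needing care are checking that distinct triangles really share at most one edge, so the case split is exhaustive, and stating cleanly the collinear-triangle identity and the passage from the subgraph back to $G$.
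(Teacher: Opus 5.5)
Your proof is correct and follows the paper's argument: the collinear identity (the longest side of a triangle on the line equals the sum of the other two), restriction to the subgraph formed by the two triangles, and, for edge-disjoint triangles, exactly the paper's order. The differences are cosmetic. You handle the shared-vertex case directly instead of reducing it to the disjoint case via the cut-vertex observation, which is fine because the argument never uses vertex-disjointness. In the shared-edge case you make the common edge the $\prec$-smallest in both triangles rather than the $\prec$-largest, and this works equally well.
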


  \begin{proof}
    Observe that in any placement of three points on the line, the longest of the three pairwise distances equals the sum of the other two.

    Two triangles in a graph are either disjoint, share exactly one vertex, or share an edge. By Observation~\ref{obs:easy} the second case reduces to the first, so it is enough to exhibit an unrealizable order in each of the other two.

    Figure~\ref{fig:two-triangles} shows the orders. For simplicity, we label edges with numbers to show their relative total order.

    \begin{figure}[H]
      \centering
      \input{figuras/triangles-disjoint}\qquad\qquad\input{figuras/triangles-shared}
      \caption{Two configurations witnessing Proposition~\ref{prop:two-triangles}. Edges are labelled by their rank in the prescribed order. \emph{Left:} two disjoint triangles. \emph{Right:} two triangles sharing the edge $5$.}
      \label{fig:two-triangles}
    \end{figure}

    Write $\ell_i$ for the length of the edge labelled $i$. In each triangle the longest of its three edges must be the sum of the other two. For the disjoint triangles this gives $\ell_6=\ell_1+\ell_2$ and $\ell_5=\ell_3+\ell_4$, so
    \begin{equation*}
      \ell_6=\ell_1+\ell_2<\ell_3+\ell_4=\ell_5<\ell_6,
    \end{equation*}
    a contradiction. For the two triangles sharing the edge $5$, which is the longest edge of both, it gives $\ell_1+\ell_2=\ell_5=\ell_3+\ell_4$, contradicting $\ell_1<\ell_3$ and $\ell_2<\ell_4$.
  \end{proof}

  Two triangles are the only obstruction among disjoint unions of two cycles.

  \begin{theorem}\label{thm:twocycles}
    Let $m,n\geq 3$. The disjoint union $C_m\sqcup C_n$ is realizable in $\R$ if and only if $(m,n)\neq(3,3)$.
  \end{theorem}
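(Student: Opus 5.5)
Necessity is immediate from Proposition~\ref{prop:two-triangles}, since $C_3\sqcup C_3$ contains two disjoint triangles. For sufficiency the plan is to use the contraction rule of Observation~\ref{obs:easy} to shrink the cycles, and then treat the remaining base cases by an explicit linear construction. Fix a total order $\prec$ on $E(C_m\sqcup C_n)$ with $(m,n)\neq(3,3)$.

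\emph{Reduction.} If both cycles have length at least $4$, the $\prec$-smallest edge lies in no triangle. Contracting it yields $C_{m-1}\sqcup C_n$ or $C_m\sqcup C_{n-1}$ with the inherited order, and a realization of the contraction lifts to one of the original. Repeating this, we reach $C_3\sqcup C_k$ with $k\geq 4$. We keep contracting while the smallest edge lies in the long cycle and $k\geq 5$. Hence it suffices to realize every order on $C_3\sqcup C_k$ ($k\geq4$) in one of two situations: the $\prec$-smallest edge lies in the triangle, or $k=4$. Rather than split these finely, I would prove directly that every order on $C_3\sqcup C_k$ with $k\geq 4$ is realizable.

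\emph{Translating to linear feasibility.} Let the triangle edges have lengths $a<b<c$. Any realization of a triangle on the line forces $c=a+b$, and conversely such lengths are realized by the points $0,a,a+b$. For the $k$-cycle with lengths $x_1,\dots,x_k$, I would use the following: if the cycle edges split into two nonempty sets $A$ and $B$, each consecutive along the cycle, with $\sum_A x_i=\sum_B x_i$, then walking right along $A$ and back left along $B$ realizes the cycle. Vertex positions are distinct for generic lengths, since both halves are strictly monotone and the interior points can be kept apart by a small perturbation within the solution set. Once lengths are chosen, the two components can be translated independently. So realizability reduces to finding positive reals with the prescribed strict order satisfying $c=a+b$ and one cycle equation $x_{\max}=\sum_{i\neq\max}x_i$, or some other balanced arc split if that one fails. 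This is a system of strict linear inequalities and two equations.

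\emph{Solving the system.} The default choice is to make the longest cycle edge equal to the sum of the other $k-1\geq 3$ edges. The triangle's equation $c=a+b$ is then satisfiable as long as the edges ranked between $b$ and $c$ can be squeezed into $(b,a+b)$. The danger is an interaction: cycle edges squeezed into an interval may force the sum equation out of its required range. The plan is a case analysis on where $x_{\max}$ sits relative to $a,b,c$, assigning lengths greedily.

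If $x_{\max}\succ c$, take $x_{\max}$ large enough and everything else is free. If $x_{\max}\prec c$, all cycle edges are below $c$. The $k-1\geq3$ smaller cycle edges can be chosen anywhere in their allowed windows, so their sum ranges over an interval whose endpoints (sums of the window infima and suprema) I expect to contain a point in the window of $x_{\max}$. The key inequality is that the sum of at least three values can be made both smaller and larger than any single admissible value. One can always shrink the values toward the lower end of their windows, for instance toward $0$ when $a$ is their only lower constraint. The extra room given by $k\geq4$ (three summands rather than two) is exactly what fails for $C_3\sqcup C_3$.

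The main obstacle I expect is this last step: checking that the window for $x_{\max}$ intersects the achievable range of sums in every interleaving pattern, particularly when several cycle edges are trapped between $b$ and $c=a+b$. In that case all of them are comparable to $b$ and their sum tends to overshoot. If the single-edge pattern fails there, I would instead switch to a split where $x_{\max}$ together with an adjacent edge balances the remaining edges. Alternatively, I would choose $a$ larger to widen the window $(b,a+b)$ when the order allows it.
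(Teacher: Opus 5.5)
\textbf{Gap.} Your necessity argument and your contraction reduction to $C_3\sqcup C_k$ with $k\ge4$ are fine and match the paper. The gap is the case you leave open, where the triangle's longest edge $c$ is the global maximum. It comes from your translation step. You only allow realizations of the cycle that go out along one arc and come back along the complementary arc. These are not all realizations: by Lemma~\ref{lem:signed-cycle}, any signing with $\sum_i\varepsilon_i\ell_i=0$, in any pattern around the cycle, closes up. With arc splits alone, this case is genuinely infeasible.

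Take $k=4$, with cycle edges $f_1,f_2,f_3,f_4$ in cyclic order, and the order $a\prec b\prec f_1\prec f_2\prec f_4\prec f_3\prec c$. This order survives your reduction, since its smallest edge is in the triangle and $k=4$. Since $a<b$, we get $c=a+b<2b$, so all four cycle lengths lie in $(b,2b)$. Hence no cycle edge is the sum of the other three. Both two-edge arc splits also fail strictly, because $f_1+f_2<f_3+f_4$ and $f_2+f_3>f_4+f_1$. Your fallbacks do not rescue this. ``$x_{\max}$ plus an adjacent edge'' is one of these two splits, and $a$ cannot be enlarged past $b$. The same happens for $k=5$ when the largest cycle edge is adjacent to the two smallest: every one of the ten arc splits fails. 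In particular, your ``key inequality'' that a sum of three admissible values can be made smaller than a single admissible value is false here.

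\textbf{Missing idea.} What you need is Lemma~\ref{lem:cycle-short-interval}: every order on $C_k$ with $k\ge4$ can be realized with all edge lengths in $(1,2)$. The signs are attached to the ranks of the edges, not to arcs of the cycle.
\begin{itemize}
\item For even $k$, give the four shortest edges the signs $+,-,-,+$ and each later pair of consecutive ranks the signs $+,-$. The zero-sum condition then becomes a single linear equation in the gaps, with a positive solution.
\item For odd $k=2q+1$, give the $q+1$ shortest edges the sign $-$ and the $q$ longest the sign $+$. A zero exists by a continuity argument.
\end{itemize}
In the example above this is the zigzag $f_1+f_3=f_2+f_4$. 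With this lemma, the case where $c$ is the global maximum is immediate: put the cycle lengths and $a,b$ in $(1,2)$, and set $c=a+b>2$.

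\textbf{Smaller repair.} Your other case also needs fixing. When the largest cycle edge is the global maximum, it cannot simply be ``taken large enough'', because it must equal the sum of the other $k-1$ cycle lengths, and that sum may fall below $c$. The paper handles this by placing every other length in $(1,3)$ with $1<a<b<3/2$. Then the sum of $k-1\ge3$ terms exceeds $3>c$.
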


  \begin{remark}
    The exceptional case costs exactly one dimension: $R(C_3\sqcup C_3)=R(K_4)=2$. Both lower bounds come from Proposition~\ref{prop:two-triangles}, since $K_4$ also contains two triangles, and for $K_4$ the upper bound is given in \cite{AM22}. For $C_3\sqcup C_3$, given any order on the six edges, assign them distinct lengths in $(1,1+\varepsilon)$ respecting it. Each triangle then has three side lengths in $(1,1+\varepsilon)$, which satisfy the triangle inequality, so both triangles can be drawn in the plane, and moving one of them far from the other realizes the order.
  \end{remark}

  \begin{remark}
    Theorem~\ref{thm:KnUKn} generalizes the disjoint case in a different direction: its instance $n=3$ is exactly $C_3\sqcup C_3$, and it gives $R(K_n\sqcup K_n)=n-1$ for every $n$. Its two halves generalize the two halves of the discussion above, the $\varepsilon$-argument of the previous remark becoming the easy one.
  \end{remark}

  The rest of this subsection is devoted to the proof of Theorem~\ref{thm:twocycles}, which reduces to a single case. The case $(m,n)=(3,3)$ is Proposition~\ref{prop:two-triangles}, so suppose $(m,n)\neq(3,3)$ and fix a total order on the edges of $C_m\sqcup C_n$. If both $m$ and $n$ are larger than $3$, the endpoints of the $\prec$-smallest edge have no common neighbour, so Observation~\ref{obs:easy} lets us contract it, shortening one of the two cycles, and induction on $m+n$ applies. The remaining case is a triangle together with a cycle of length at least $4$, which we treat after two lemmas. Realizing a single cycle on the line is a matter of closing up a walk: give each edge a direction and a length, and the walk returns to where it started exactly when the signed lengths cancel.

  \begin{lemma}\label{lem:signed-cycle}
    Let $C$ be a cycle whose edges, listed in cyclic order, are $e_1,\ldots,e_k$. If positive numbers $\ell_1,\ldots,\ell_k$ and signs $\varepsilon_i\in\{-1,+1\}$ satisfy
    \begin{equation*}
      \sum_{i=1}^{k}\varepsilon_i\ell_i=0,
    \end{equation*}
    then, for every $\tau > 0$, there is a placement of the vertices of $C$ on the line in which the length of $e_i$ lies in $(\ell_i-\tau,\ell_i+\tau)$ for every $i$.
  \end{lemma}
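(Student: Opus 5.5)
The plan is to place the vertices by walking around the cycle, then perturb the positions slightly to make the placement injective. Label the vertices $v_0,v_1,\ldots,v_{k-1}$ so that $e_i=v_{i-1}v_i$, with indices taken modulo $k$ (so $e_k=v_{k-1}v_0$). Set $x_0=0$ and define recursively $x_i=x_{i-1}+\varepsilon_i\ell_i$ for $i=1,\ldots,k-1$. The only edge not handled by the recursion is the closing edge $e_k$. For it we compute
\[
x_{k-1}-x_0=\sum_{i=1}^{k-1}\varepsilon_i\ell_i=-\varepsilon_k\ell_k,
\]
using the hypothesis $\sum_i\varepsilon_i\ell_i=0$. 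Hence $|x_0-x_{k-1}|=\ell_k$, and every edge $e_i$ has length exactly $|x_i-x_{i-1}|=\ell_i$.

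The only remaining issue, and the main one, is that the map $v_i\mapsto x_i$ need not be injective: the walk may revisit a point. A placement must be an injection, so I will perturb. Replace $x_i$ by $x_i+\delta_i$, where the $\delta_i$ are chosen with $|\delta_i|<\tau/2$ and such that the perturbed points are pairwise distinct. Such a choice is always possible: choose the $\delta_i$ one at a time, each avoiding the finitely many values that would create a coincidence with an earlier point.

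Each edge length then changes by at most $|\delta_i|+|\delta_{i-1}|<\tau$. So the perturbed placement is injective and satisfies $\bigl|\,|x'_i-x'_{i-1}|-\ell_i\,\bigr|<\tau$ for every $i$. This is exactly the open-interval conclusion of the lemma, which explains why the statement only asks for lengths within $\tau$ rather than exact lengths.
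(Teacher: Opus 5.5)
Your proposal is correct and follows the paper's proof: build the walk $x_i=x_{i-1}+\varepsilon_i\ell_i$, use the zero-sum condition to close it up so that every edge has length exactly $\ell_i$, and then apply a small perturbation to make the placement injective. Your explicit choice of perturbations $|\delta_i|<\tau/2$ and the bound $|\delta_i|+|\delta_{i-1}|<\tau$ on the change of each edge length spell out what the paper calls a ``small generic perturbation.''
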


  \begin{proof}
    Set $p_0=0$ and $p_i=p_{i-1}+\varepsilon_i\ell_i$ for $i=1,\ldots,k$. The zero-sum condition says that $p_k=p_0$, so the walk closes up into a cycle, and $\lvert p_i-p_{i-1}\rvert=\ell_i$ for every $i$.

    The placement produced may have coincident vertices, but an arbitrarily small generic perturbation of the vertices makes it injective while changing each length by less than $\tau$.
  \end{proof}

  In every application below we first fix the prescribed lengths of all the components at once, and these are pairwise distinct. We then tacitly take $\tau$ smaller than the least gap between two of them, so that the resulting placements induce exactly the order prescribed by the $\ell_i$.

  We will also need that, for a cycle of length at least $4$, this can be done with all lengths in a short interval.

  \begin{lemma}\label{lem:cycle-short-interval}
    For every $k\geq4$ and every total order
    \begin{equation*}
      e_1\prec e_2\prec\cdots\prec e_k
    \end{equation*}
    on the edges of $C_k$, there are lengths
    \begin{equation*}
      1<\ell_1<\ell_2<\cdots<\ell_k<2
    \end{equation*}
    and signs $\varepsilon_i\in\{-1,+1\}$ such that $\sum_{i=1}^{k}\varepsilon_i\ell_i=0$. Consequently, $C_k$ has a placement on the line realizing $e_1\prec\cdots\prec e_k$ with all edge lengths in $(1,2)$.
  \end{lemma}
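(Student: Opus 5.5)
We need lengths $1<\ell_1<\cdots<\ell_k<2$ and a choice of signs with $\sum_i\varepsilon_i\ell_i=0$. The "consequently" part then follows from Lemma~\ref{lem:signed-cycle}, applied with $\tau$ smaller than half the least gap between consecutive $\ell_i$ and smaller than $\min(\ell_1-1,\,2-\ell_k)$. The realized lengths then stay in $(1,2)$ and keep the order $e_1\prec\cdots\prec e_k$. Note that the signs are attached to the edges themselves, so the cyclic position of $e_i$ in $C_k$ does not matter: Lemma~\ref{lem:signed-cycle} only needs the signed sum of the lengths to vanish.

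The plan is to take $\ell_i=1+x_i$ with $0<x_1<\cdots<x_k<1$. If $p$ signs are $+$ and $q$ are $-$, the condition becomes
\begin{equation*}
(p-q)+\sum_{+}x_i-\sum_{-}x_i=0 .
\end{equation*}
I will split into two cases according to the parity of $k$.

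\emph{$k$ even.} Take $p=q=k/2$ and alternate the signs along the sorted list, pairing $e_{2j-1}$ with $-$ and $e_{2j}$ with $+$, except that we flip the first pair. Concretely, assign $+$ to $e_1,e_4,e_6,\ldots,e_k$ and $-$ to $e_2,e_3,e_5,\ldots,e_{k-1}$. We then need
\begin{equation*}
x_1-x_2-x_3+\sum_{j\ge2}(x_{2j}-x_{2j-1})=0 .
\end{equation*}
The differences $x_{2j}-x_{2j-1}$ for $j\ge2$ are positive and free, and $x_2+x_3-x_1>0$. Since $k\ge4$, there is at least one free difference, and we can choose the gaps so that everything balances. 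For instance, set $x_i=i\delta$ for $i\le 3$ and then make one later gap equal to $4\delta$. This is a routine adjustment.

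\emph{$k$ odd.} Here $k\ge5$. Take $p=(k+1)/2$ and $q=(k-1)/2$, so $p-q=1$, and we need $\sum_{-}x_i-\sum_{+}x_i=1$. Put $+$ on the smallest $p$ edges and $-$ on the largest $q$ edges. Then choose the small $x_i$ close to $0$ and the large $x_i$ close to a common value $c$ with $qc\approx 1$, which gives $c=2/(k-1)\le 1/2<1$. By continuity, suitable strictly increasing values exist with $\sum_- x_i-\sum_+x_i=1$ exactly. For example, let $x_i=i\eta$ for the $+$ group and $x_i=c+i\eta$ for the $-$ group, with $\eta$ tiny, and then solve for $c$, which enters linearly with coefficient $q$. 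We must also check that $c$ stays below $1$ minus the small perturbations and above the $+$ group; both hold for $\eta$ small.

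The main obstacle is the odd case. It is exactly where $k=3$ fails: with $q=1$ one would need $c\approx1$, pushing $\ell_3$ to $2$, since the longest edge must be the sum of the other two. I expect the real work to be verifying that $q\ge2$ gives room. The rest of the argument is bookkeeping.
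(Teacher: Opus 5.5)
Your strategy is essentially the paper's. In the even case you start with $+,-,-,+$ on $e_1,\ldots,e_4$ and pair the remaining edges, so that the zero-sum condition becomes a linear relation among the gaps. In the odd case $k=2q+1$ you give opposite signs to the $q+1$ shortest and the $q$ longest edges. The only real difference is in the odd case: the paper finds a zero of $F$ by connectedness and intermediate values, while you solve explicitly for $c\approx 1/q$. That is an equally valid and more explicit substitute.

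The even case, however, contains a computational error. With $+$ on $e_1,e_4,e_6,\ldots,e_k$ and $-$ on $e_2,e_3,e_5,\ldots,e_{k-1}$, the signed sum is
\[
x_1-x_2+\sum_{j\ge2}(x_{2j}-x_{2j-1})=-\delta_1+\delta_3+\delta_5+\cdots+\delta_{k-1},\qquad \delta_i=x_{i+1}-x_i .
\]
Your displayed equation subtracts $x_3$ twice, since it already appears in the $j=2$ term $x_4-x_3$. As a result your sample choice fails. For $k=4$, taking $x_i=i\delta$ for $i\le3$ and $x_4=x_3+4\delta$ gives signed sum $\delta-2\delta-3\delta+7\delta=3\delta\neq0$.

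The repair is immediate. The correct condition is $\delta_1=\delta_3+\delta_5+\cdots+\delta_{k-1}$, the mirror image of the paper's $\delta_3=\delta_1+\sum_{r\ge3}\delta_{2r-1}$. Choose $x_1>0$, the gaps $\delta_3,\delta_5,\ldots,\delta_{k-1}>0$ and the even-indexed gaps freely, and set $\delta_1=\delta_3+\delta_5+\cdots+\delta_{k-1}$. Then scale all $x_i$ down until $x_k<1$; this is allowed because the relation is homogeneous when $p=q$. For $k=4$, equally spaced lengths already work.

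Your handling of the ``consequently'' part, with $\tau$ below half the least gap and below $\min(\ell_1-1,2-\ell_k)$, is fine and slightly more careful than the paper's.
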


  \begin{proof}
    Since the zero-sum condition does not depend on the order in which the edges are listed, Lemma~\ref{lem:signed-cycle} (applied to the edges in cyclic order) produces a placement in which the length of $e_i$ is arbitrarily close to $\ell_i$. As $\ell_1<\cdots<\ell_k$, a close enough placement realizes $e_1\prec\cdots\prec e_k$. It remains to construct the lengths and signs.

    Suppose first that $k=2q$ is even, with $q\geq2$. Give the first four lengths the signs $+,-,-,+$ and, for $3\leq r\leq q$, give the pair $(\ell_{2r-1},\ell_{2r})$ the signs $+,-$. There are as many $+$ signs as $-$ signs, so the signed sum does not change if we translate all the lengths by a common amount, and it depends only on the gaps $\delta_i=\ell_{i+1}-\ell_i$. Indeed, grouping the terms as above,
    \begin{equation*}
      \sum_{i=1}^{k}\varepsilon_i\ell_i
      =
      (\ell_1-\ell_2)+(\ell_4-\ell_3)+\sum_{r=3}^{q}(\ell_{2r-1}-\ell_{2r})
      =
      -\delta_1+\delta_3-\sum_{r=3}^{q}\delta_{2r-1},
    \end{equation*}
    so the zero-sum equation becomes
    \begin{equation*}
      \delta_3=\delta_1+\sum_{r=3}^{q}\delta_{2r-1}.
    \end{equation*}
    Choose every gap other than $\delta_3$ to be an arbitrary positive number, and let the displayed equation determine $\delta_3$, which is then positive as well. Finally set $\ell_1=1+\lambda$ and $\ell_{i+1}=\ell_i+\lambda\delta_i$. The equation is homogeneous in the gaps, so it still holds, and for $\lambda$ small enough every length lies in $(1,3/2)\subset(1,2)$.

    Now suppose that $k=2q+1$ is odd, so $q\geq2$. On the set of increasing tuples $1<\ell_1<\cdots<\ell_k<2$, which is open and connected, define
    \begin{equation*}
      F(\ell_1,\ldots,\ell_k)=\sum_{i=q+2}^{2q+1}\ell_i-\sum_{i=1}^{q+1}\ell_i.
    \end{equation*}
    If all lengths are close to $1$, then $F<0$. If the first $q+1$ lengths are close to $1$ and the last $q$ are close to $2$, then $F$ is close to $2q-(q+1)=q-1>0$. Being continuous on a connected set and taking both signs, $F$ vanishes somewhere. Giving the smallest $q+1$ lengths the sign $-1$ and the largest $q$ lengths the sign $+1$ makes the signed sum equal to $F=0$.
  \end{proof}

  We are now in a position to settle the remaining case, and with it the theorem.

  \begin{proof}[Proof of Theorem~\ref{thm:twocycles}]
    By the reduction above we may assume that the graph is $C_3\sqcup C_n$ with $n\geq4$. Fix an arbitrary total order on its edges, and let $T$ and $D$ denote the triangle and the $n$-cycle, respectively. We distinguish two cases according to the component containing the globally largest edge.

    Suppose first that the globally largest edge belongs to $T$. Apply Lemma~\ref{lem:cycle-short-interval} to the induced order on $E(D)$, obtaining cycle lengths in $(1,2)$. The two nonmaximal edges of $T$ can be assigned distinct lengths $a$ and $b$ in the same interval at their prescribed positions relative to the cycle lengths, since the gaps these finitely many lengths leave in $(1,2)$ are open intervals. Give the largest edge of $T$ the length
    \begin{equation*}
      c=a+b.
    \end{equation*}
    Since $a,b>1$, we have $c>2$, so this edge is indeed globally largest. The equation $c-a-b=0$ realizes the triangle, while Lemma~\ref{lem:cycle-short-interval} realizes $D$.

    Suppose now that the globally largest edge belongs to $D$. List the three triangle edges in their induced order, and assign them lengths
    \begin{equation*}
      1<a<b<\frac32,
      \qquad
      c=a+b<3.
    \end{equation*}
    Assign every other edge except the global maximum a distinct length in $(1,3)$ at its prescribed position. This is possible by filling the open intervals determined by $1,a,b,c,3$. Finally, give the globally largest edge of $D$ the length
    \begin{equation*}
      L = \sum_{e\in E(D)\setminus\{e_{\max}\}}\ell(e).
    \end{equation*}
    There are $n-1\geq3$ summands, all larger than $1$, and hence $L>3$. Thus $L$ is larger than every previously assigned length. The equation $L-\sum_{e\neq e_{\max}}\ell(e)=0$ realizes $D$, while $c-a-b=0$ realizes $T$. Applying Lemma~\ref{lem:signed-cycle} to both components completes the proof.
  \end{proof}

  \subsection{The maximum number of edges of a graph realizable on the line}\label{sec:R1}

  How many edges can a graph on $n$ vertices have if every order on its edges is realizable on the line? The cycle $C_n$ has $n$ edges and is realizable, so the answer is at least $n$. It turns out to be very little more.

  \begin{theorem}\label{thm:ex1}
    \[
    n+\frac{\ln n}{\ln3}-O(\ln\ln n) \leq \ex_{\R}(n)
    \leq
    n+\frac{2n}{\ln n}
    +O\!\left(\frac{n}{(\ln n)^2}\right).\]
  \end{theorem}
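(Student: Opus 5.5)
The two bounds need different ideas: a counting argument in a hyperplane arrangement for the upper bound, and an explicit ear construction for the lower bound.

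\emph{Upper bound.} I would bound the number of orders that a fixed graph $G$ with $n$ vertices and $m$ edges can realize on the line, and compare it with $m!$.

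A placement is a vector $x\in\R^n$, and we may quotient by translations, so it lives in $\R^{n-1}$. The induced order is determined by the signs of the linear forms $x_u-x_v$ for $uv\in E$, together with the signs of $(x_u-x_v)\pm(x_w-x_z)$ for pairs of edges. Indeed, once the signs of the differences are fixed, $|x_u-x_v|<|x_w-x_z|$ is a linear inequality. Hence every realizable order is constant on each region of an arrangement of at most $N=m+2\binom m2\le m^2$ hyperplanes in $\R^{n-1}$. The number of such regions is at most $\sum_{i\le n-1}\binom Ni\le (eN/(n-1))^{n-1}\le (em^2/n)^n$.

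If $G$ is realizable, then $m!\le (em^2/n)^n$. Write $m=n+k$ with $k=o(n)$ (otherwise we are done), and use $\ln m!=m\ln m-m+O(\ln m)$ together with $\ln m=\ln n+k/n+O(k^2/n^2)$. This gives
\[
(n+k)\ln n+k-n\le n+n\ln n+2k+O(k^2/n+\ln n),
\]
that is, $k(\ln n-1)\le 2n+O(k^2/n+\ln n)$. It yields $k\le 2n/\ln n+O(n/(\ln n)^2)$. This step is routine, and I only need to track the error terms carefully.

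\emph{Lower bound.} I would build a $2$-connected graph by ears. Start with a cycle $H_0$. Then obtain $H_{i}$ from $H_{i-1}$ by attaching a path $P_i$ between two vertices of $H_{i-1}$, where $P_i$ has roughly $2|E(H_{i-1})|$ internal edges, so that $|E(H_i)|\approx 3|E(H_{i-1})|$. After $k$ ears we have about $3^k$ vertices and $|E|=|V|+k$, which gives $k\approx \log_3 n$. The $O(\ln\ln n)$ loss covers rounding and the choice of the starting cycle.

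I would prove by induction on $i$ the stronger statement that every order on $E(H_i)$ is realizable on the line with all lengths in a short interval such as $(1,2)$, in the spirit of Lemma~\ref{lem:cycle-short-interval}. The base case is that lemma itself. A first reduction: if the $\prec$-smallest edge lies on a long ear, contract it using Observation~\ref{obs:easy}, noting that no triangles arise while the ears are long. This lets me assume the ears are as short as their extremal length.

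For the inductive step, realize $H_{i-1}$ with the induced order and lengths in $(1,2)$. The endpoints of $P_i$ then sit at distance $D<2|E(H_{i-1})|$. Assign the edges of $P_i$ lengths in the prescribed gaps of $(1,2)$, and choose signs so that the signed sum along $P_i$ equals $D$. This is possible by the same continuity and balancing argument as in Lemma~\ref{lem:cycle-short-interval}, since $P_i$ has about $2|E(H_{i-1})|>D$ edges, each of length between $1$ and $2$. Finally, Lemma~\ref{lem:signed-cycle} supplies the perturbation that makes the placement injective.

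\emph{Main obstacle.} The hard step is the sign and length choice on $P_i$. I must hit the exact value $D$ while the lengths of $P_i$ are confined to prescribed gaps determined by the lengths already used in $H_{i-1}$, and the parity of the number of edges interferes.

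My first attempt is the intermediate value argument from the odd case of Lemma~\ref{lem:cycle-short-interval}. I would slide the lengths of $P_i$ continuously inside their gaps, using enough $+$ signs that the signed sum ranges over an interval containing $D$.

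If the gaps are too rigid for this, I would instead make the induction hypothesis flexible. I would require that $H_{i-1}$ be realizable with its attachment distance $D$ adjustable within a whole interval, and then use the factor $3$ as the slack that makes this flexibility possible.
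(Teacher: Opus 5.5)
Your upper bound is the paper's argument (a hyperplane arrangement in $\R^{n-1}$ followed by Stirling's formula) and it is fine. The one slip is arithmetic: $\ln m!=(n+k)\ln n-n+O(k^2/n+\ln n)$, so you get $k(\ln n-2)\le 2n+O(k^2/n+\ln n)$ rather than $k(\ln n-1)$. This gives the same conclusion.

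The lower bound has a genuine gap, at exactly the step you flag as the main obstacle. Your induction hypothesis only provides \emph{some} realization of $H_{i-1}$ with lengths in $(1,2)$. It gives you no control over $D$, nor over the widths of the gaps into which the prescribed interleaving forces the edges of $P_i$, and those gaps can be arbitrarily narrow. Suppose the order places all $c$ edges of $P_i$ between two consecutive lengths of $H_{i-1}$, forming a gap $(\alpha,\alpha+w)$. Then every signed sum along $P_i$ lies within $cw$ of one of the points $(2p-c)\alpha$, $0\le p\le c$, which form a progression of step $2\alpha>2$. Meanwhile $D$ is already fixed by the realization of $H_{i-1}$ and need not lie within $cw$ of that progression. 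Sliding lengths inside gaps of width $w$ cannot repair this. The intermediate value argument of Lemma~\ref{lem:cycle-short-interval} works only because there the lengths may sweep all of $(1,2)$, and a prescribed interleaving can deny you that.

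Your fallback is not yet a statement. You would have to prove, for every interleaving, a realization of $H_{i-1}$ in which $D$ ranges over an interval \emph{and} every gap receiving edges of $P_i$ stays wide, and then carry this stronger hypothesis through the induction. That is the whole difficulty. Separately, the preliminary contraction step is unjustified. Contracting the $\prec$-smallest edge does not let you assume the ears have minimal length, since that edge may lie on $H_0$ or on an ear already at its minimal length.

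The paper avoids any coupling between cycles. It uses $k$ disjoint cycles joined by bridges, which are free by Observation~\ref{obs:easy} and give essentially the same count, $n+k-1$. It also drops the short-interval requirement. Lengths are assigned from left to right with a common small step, and the cycles are processed in the order of their top edges. The step into each top edge $t_j$ is enlarged by a special gap $g_j$, chosen via Lemma~\ref{lem:one-dimensional-balancing} so that the cycle $D_j$ closes exactly. Every other edge of $D_j$ lies below $t_j$, so later gaps never disturb cycles already closed. The price is that lengths grow by a factor of at most $3$ per cycle, which is why the cycles need length $2\cdot3^{k-1}+2$ (Lemma~\ref{lem:equal-cycles-line-realizable}).

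If you want to keep ears, you would need this top-edge mechanism in place of the short-interval induction. You would also have to deal with a complication that disjoint cycles avoid: edges shared by several cycles must carry consistent signs.
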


  The upper bound is a region-counting argument, the lower bound an explicit construction out of long cycles joined by bridges. We treat them in turn.

  The count compares two quantities. On one hand, a graph with $m$ edges must realize all $m!$ orders on its edges. On the other, $n$ points on the line admit only a bounded number of combinatorially distinct configurations, and on the line these are easy to count: the comparison of two edge lengths is a difference of squares, hence a product of two linear forms, so the configurations are exactly the regions of a hyperplane arrangement. An upper bound for the number of regions is therefore an upper bound for $m$.

  \begin{proof}[Proof of the upper bound in Theorem~\ref{thm:ex1}]
    Let $G$ be a graph on $n$ vertices and $m$ edges with $R(G)=1$. Write a placement of its vertices on the line as
    \begin{equation*}
      x=(x_1,\ldots,x_n)\in\R^n.
    \end{equation*}
    For each edge fix an order of its endpoints, so that for an edge $ij$ the difference $x_i-x_j$ is well-defined. This choice only flips the signs of the two linear factors below and does not affect the argument. For two distinct edges $ij$ and $k\ell$, the comparison
    \begin{equation*}
      |x_i-x_j|<|x_k-x_\ell|
    \end{equation*}
    is equivalent to
    \begin{equation*}
      (x_i-x_j)^2-(x_k-x_\ell)^2<0,
    \end{equation*}
    and the expression on the left factors as
    \begin{equation*}
      (x_i-x_j-x_k+x_\ell)(x_i-x_j+x_k-x_\ell).
    \end{equation*}
    Thus the comparison between $ij$ and $k\ell$ is determined by the signs of two linear forms. Ranging over all unordered pairs of edges produces at most
    \begin{equation*}
      h=2\binom{m}{2}=m(m-1)
    \end{equation*}
    linear hyperplanes. Distinct pairs of edges may determine the same hyperplane, which only helps, since the region bound below increases with the number of hyperplanes.

    All these hyperplanes contain the diagonal subspace
    \begin{equation*}
      \Delta=\{x_1=\cdots=x_n\},
    \end{equation*}
    because translating every point by the same amount does not change any edge length. Taking the quotient by $\Delta$, we obtain a central arrangement of at most $h$ hyperplanes in a vector space of dimension $n-1$, which by the standard region bound for central arrangements \cite{ZASLAVSKY1975} has at most $2\sum_{r=0}^{n-2}\binom{h-1}{r}$ regions. Each region determines the signs of all the linear factors above and therefore at most one total order of the edge lengths. Antipodal regions correspond to the placements $x$ and $-x$, which induce exactly the same edge lengths, so they may be counted once. The number of edge orders realizable by $G$ is therefore at most
    \begin{equation*}
      \sum_{r=0}^{n-2}\binom{h-1}{r}.
    \end{equation*}
    Since every one of the $m!$ total orders on $E(G)$ must be realizable, we obtain
    \begin{equation}\label{eq:arrangement-count-bound}
      m!
      \leq
      \sum_{r=0}^{n-2}\binom{m(m-1)-1}{r}.
    \end{equation}

    We estimate the largest $m$ for which \eqref{eq:arrangement-count-bound} can hold. We may assume $m\geq n$, as the bound is trivial otherwise. Bounding the sum by $n-1$ times its largest term and using $\binom{m(m-1)-1}{r}\leq\binom{m^2}{n-2}$ for $r\leq n-2$ gives $m!\leq (n-1)\binom{m^2}{n-2}\leq n\,m^{2n}$, which together with $m!\geq(m/e)^m$ forces $m=O(n)$. Note also that $h-1\geq n(n-1)-1$.

    In the relevant range the last term of the sum dominates, since
    \begin{equation*}
      \frac{\binom{h-1}{r-1}}{\binom{h-1}{r}}
      =
      \frac{r}{h-r}
      =O\!\left(\frac1n\right)
    \end{equation*}
    uniformly for $r\leq n-2$. Hence
    \begin{equation*}
      \sum_{r=0}^{n-2}\binom{h-1}{r}
      =
      \binom{h-1}{n-2}\left(1+O\!\left(\frac1n\right)\right).
    \end{equation*}
    Writing $m=\rho n$ and using Stirling's formula gives
    \begin{align*}
      \ln\binom{h-1}{n-2}
      &=n\ln n+2n\ln\rho+n+O(\ln n),\\
      \ln(m!)
      &=\rho n\ln n+\rho n\ln\rho-\rho n+O(\ln n).
    \end{align*}
    Consequently,
    \begin{equation*}
      \ln(m!)-\ln\binom{h-1}{n-2}
      =
      n\Phi_n(\rho)+O(\ln n),
    \end{equation*}
    where
    \begin{equation*}
      \Phi_n(\rho)
      =
      (\rho-1)\ln n+(\rho-2)\ln\rho-\rho-1.
    \end{equation*}
    For every fixed $\rho>1$, this expression is positive for all sufficiently large $n$, so the threshold has $\rho=1+o(1)$. Writing $\rho=1+\delta$ and $L=\ln n$ and expanding at $\delta=0$ yields
    \begin{equation*}
      \Phi_n(1+\delta)
      =
      \delta L-2-2\delta+\frac32\delta^2+O(\delta^3),
    \end{equation*}
    whose zero satisfies $\delta=\frac{2}{L}+O\!\left(\frac1{L^2}\right)$. It follows from \eqref{eq:arrangement-count-bound} that
    \begin{equation*}
      m
      \leq
      n+\frac{2n}{\ln n}
      +O\!\left(\frac{n}{(\ln n)^2}\right),
    \end{equation*}
    as claimed.
  \end{proof}

  For the lower bound we take $k$ long disjoint cycles and join them by bridges. A cycle has as many edges as vertices, and each bridge adds one more edge, so the gain over $n$ is essentially the number of cycles, and how many cycles fit into $n$ vertices depends on how long they must be for their disjoint union to be realizable. The following elementary balancing lemma, a one-dimensional discrepancy estimate, is what controls this. See, for example, \cite[Chapter~1]{MATOUSEK1999} for general background on discrepancy.

  \begin{lemma}\label{lem:one-dimensional-balancing}
    Let $Y>0$ and let $y_1,\ldots,y_q\in[1,Y]$ with $q>Y$. Then there are signs $\varepsilon_i\in\{-1,+1\}$ such that
    \begin{equation*}
      -2Y
      \leq
      Y+\sum_{i=1}^{q}\varepsilon_i y_i
      <0.
    \end{equation*}
  \end{lemma}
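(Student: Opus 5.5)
We will choose the signs greedily, one at a time, keeping a running sum that starts at $Y$ and is pushed toward zero. Define $S_0=Y$ and, for $i=1,\ldots,q$, set
\begin{equation*}
  \varepsilon_i=\begin{cases}-1 & \text{if } S_{i-1}\geq 0,\\ +1 & \text{if } S_{i-1}<0,\end{cases}
  \qquad S_i=S_{i-1}+\varepsilon_i y_i .
\end{equation*}
The target is $-2Y\leq S_q<0$. The natural invariant is that once the walk has crossed below zero it stays in a window of width at most $Y$ around $0$. Concretely, if $S_{i-1}\in[0,Y]$ then $S_i\in[-Y,Y)$, and if $S_{i-1}\in[-Y,0)$ then $S_i\in[-Y+1,Y)$, because $y_i\in[1,Y]$. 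Starting from $S_0=Y$, every $S_i$ therefore lies in $[-Y,Y]$, which already gives $S_q\geq -Y\geq -2Y$.

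The difficulty is that the greedy rule only guarantees $|S_q|\leq Y$. It does not guarantee $S_q<0$. We fix this by modifying the end of the process, and this is where the hypothesis $q>Y$ enters.

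First we show the walk must go negative at some step. While $S_{i-1}\geq 0$ we subtract $y_i\geq1$, so the walk decreases by at least $1$ each step. Starting at $Y$, after at most $\lfloor Y\rfloor+1\leq q$ steps it becomes negative. Hence there is a first index $j\leq q$ with $S_j<0$, and $S_j\geq S_{j-1}-Y\geq -Y$.

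For the remaining elements $y_{j+1},\ldots,y_q$, we pair consecutive steps so as to return below zero. Suppose we are at a negative value $S\in[-Y,0)$. We add $y$, reaching $S+y\in[-Y+1,Y)$. If this is still negative we continue. If it is nonnegative, the next element is subtracted and the value lands in $[-Y,Y)$. The issue is only the last step, which may land at a nonnegative value.

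The simplest fix is to alter the final sign. If the greedy $S_q\geq 0$, we flip $\varepsilon_q$ to $-1$ (if it was $+1$), or otherwise argue from the last negative time. A cleaner approach is this. Let $t$ be the last index with $S_t<0$; then $t\geq j$. If $t<q$, then $S_{t+1}\geq0$ was obtained by adding, so $S_{t+1}\in[0,Y)$. From then on the greedy walk subtracts whenever it is nonnegative, so $S_{t+2}<S_{t+1}$, and so on. Applying the greedy rule again must eventually produce a negative value unless the walk ends first.

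So the robust choice is to reverse the final added element. Take $\varepsilon_q=-1$ whenever $S_{q-1}\in[0,Y]$, which is the greedy choice anyway, giving $S_q\in[-Y,Y-1]$. Alternatively, we use $q>Y$ to show that the walk is forced below zero right at the end.

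I expect this endgame to be the main obstacle. If the direct greedy argument does not close, the fallback is to treat the last element specially. We run the greedy on $y_1,\ldots,y_{q-1}$ to obtain $S_{q-1}\in[-Y,Y]$. If $S_{q-1}\geq0$, subtracting $y_q$ gives a value $\geq -Y$, and a value $<0$ provided $y_q>S_{q-1}$. If that fails, we instead subtract enough of the last several elements, using $q>Y$ to guarantee a sufficient count of terms each at least $1$. Since every term is at least $1$ and the excess is at most $Y<q$, the total subtracted can exceed the positive excess while the sum stays above $-2Y$.
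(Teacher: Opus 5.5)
Your setup is sound: the greedy walk stays in $[-Y,Y]$, and $q>Y$ forces some $S_j<0$. But the proposal never closes the endgame, and the endgame is the entire content of the lemma.

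The greedy walk can end at a nonnegative value even when its last sign is $-1$. For $Y=3$ and $(y_1,y_2,y_3,y_4)=(3,1,3,1)$ it gives $S_1=0$, $S_2=-1$, $S_3=2$, $S_4=1$. So your proposed repairs fail as follows:
\begin{itemize}
\item Flipping $\varepsilon_q$ is unavailable, since $\varepsilon_q$ is already $-1$.
\item Choosing $\varepsilon_q=-1$ when $S_{q-1}\geq 0$ only yields $S_q\leq Y-1$, not $S_q<0$.
\item The claim that $q>Y$ forces the walk below zero at the end is false.
\item The final fallback (``subtract enough of the last several elements \ldots'') is not an argument. Changing a sign moves the sum by $2y_i$, not $y_i$. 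You do not say which signs change. And the assertion that the sum can be made negative while staying above $-2Y$ is essentially the lemma itself.
\end{itemize}

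The last-negative-time idea you started does finish the proof if you carry it through. Let $t$ be the last index with $S_t<0$; it exists by your argument. If $t=q$ you are done, since $S_q\geq -Y$. Otherwise $\varepsilon_{t+1}=+1$, and because $S_{t+1},\ldots,S_{q-1}\geq 0$, every later sign is $-1$. Flipping $\varepsilon_{t+1}$ to $-1$ turns the final value into $S_q-2y_{t+1}$. This is at least $-2Y$ because $S_q\geq 0$ and $y_{t+1}\leq Y$. It also equals $S_t-\sum_{i>t}y_i$, which is negative. In the example, $t=2$ and the result is $-5$.

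The paper's proof is more direct. With $W=\sum_i y_i>Y$, it gives the sign $-1$ to an initial segment of indices whose sum $U$ first exceeds $(Y+W)/2$. The overshoot is at most $Y$, so $Y+W-2U\in[-2Y,0)$.

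Both arguments use what your $[-Y,Y]$ window ignores. A single sign change moves the sum by $2y_i\leq 2Y$, which is the full width of the target interval $[-2Y,0)$. So a sum that passes from a nonnegative value to a negative one through one sign change is still at least $-2Y$.
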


  \begin{proof}
    Let $W=y_1+\cdots+y_q$. Since $q>Y$ and every $y_i\geq1$, we have $W>Y$, and in particular $W>(Y+W)/2$. Starting with the empty set, add indices to $A$ until the partial sum $U=\sum_{i\in A}y_i$ first exceeds $(Y+W)/2$. The last summand added is at most $Y$, and therefore
    \begin{equation*}
      \frac{Y+W}{2}<U\leq\frac{Y+W}{2}+Y.
    \end{equation*}
    Set $\varepsilon_i=-1$ for $i\in A$ and $\varepsilon_i=+1$ otherwise. Then $Y+\sum_i\varepsilon_i y_i=Y+W-2U$, which lies in $[-2Y,0)$.
  \end{proof}

  We can now show that a disjoint union of $k$ cycles, each of them long enough in terms of $k$, is realizable. The construction is as follows. Assign to the edges nearly equal lengths, contained in a small window above $1$ and spaced by a common step $\eta$ in the prescribed order. By Lemma~\ref{lem:signed-cycle}, closing up a cycle amounts to choosing signs on its edges whose signed sum is zero, and with many nearly equal lengths this can almost be done by taking half of the signs positive and half negative. Lemma~\ref{lem:one-dimensional-balancing} bounds the error that remains, and we correct it by increasing the length of the longest edge of the cycle by a suitable gap. Such an increase shifts every length above it, so we treat the cycles in the order of their longest edges, and no correction then affects a cycle already closed. Each gap is bounded in terms of the lengths assigned so far, and these grow by a factor of three per cycle, which is why the cycles must be exponentially long in their number.

  \begin{lemma}\label{lem:equal-cycles-line-realizable}
    For every $k\geq1$, set
    \begin{equation*}
      K(k)=2\cdot3^{k-1}+2.
    \end{equation*}
    Then the disjoint union $kC_{K(k)}$ is realizable in $\R$.
  \end{lemma}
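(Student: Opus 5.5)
Fix a total order on the edges of $kC_{K}$ with $K=K(k)$. I will build the realization as the paper's sketch suggests: prescribe lengths for all edges at once, close each cycle with a signed zero sum, and then invoke Lemma~\ref{lem:signed-cycle} for each cycle separately. Placing the cycles far apart on the line, with $\tau$ smaller than the least gap between prescribed lengths, yields exactly the prescribed order.

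Label the cycles $C^{(1)},\dots,C^{(k)}$ so that their $\prec$-largest edges appear in increasing order. Start from tentative lengths $1+j\eta$ for the edge of rank $j$, where $\eta>0$ is tiny. I will process the cycles in this order, and at step $t$ insert a gap $g_t\ge0$ just before the maximal edge $f_t$ of $C^{(t)}$. That is, $g_t$ is added to $f_t$ and to every edge above it.

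Since $f_s\prec f_t$ for $s<t$, all edges of $C^{(s)}$ lie below $f_t$, so these shifts never disturb an earlier cycle. Within $C^{(t)}$ the only edge that moves is $f_t$, because every other edge of the cycle is below it. The order is preserved because the shifts are monotone.

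To close $C^{(t)}$, let $x_t$ denote the length of $f_t$ after the previous gaps but before $g_t$. Let $y_1,\dots,y_q$ be the lengths of the other $q=K-1$ edges, all lying in $[1,x_t]$. I will apply Lemma~\ref{lem:one-dimensional-balancing} with $Y=x_t$, which requires $q>x_t$. It gives signs with $x_t+\sum\varepsilon_iy_i\in[-2x_t,0)$. Then I set $g_t=-(x_t+\sum\varepsilon_i y_i)\in(0,2x_t]$ and give $f_t$ the sign $+$. The new length $x_t+g_t$ makes the signed sum vanish, and it satisfies $x_t+g_t\le 3x_t$.

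The main obstacle is bookkeeping the growth, so that $q>x_t$ holds at every step. I claim inductively that after step $t$ every length is at most $3^t(1+\varepsilon)$ for an $\varepsilon$ that is small once $\eta$ is small. Before step $t$ every edge length is at most $M_{t-1}$, the initial maximum plus the gaps so far. Hence $x_t\le M_{t-1}$, and $g_t\le 2x_t$ gives $M_t\le 3M_{t-1}$. With $M_0\approx 1$ this yields $x_t\le 3^{t-1}(1+\varepsilon)$.

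So I need $K-1>3^{k-1}(1+\varepsilon)$, which holds comfortably for $K=2\cdot3^{k-1}+2$. The extra slack presumably accommodates the paper's exact normalization, for instance the balancing lemma needing $y_i\ge1$ with a margin or some strictness. One point to check is that after the shift $f_t$ stays strictly between its neighbours in the order. This holds because the shift moves everything at or above $f_t$ together and $g_t>0$.

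Finally, with all lengths fixed and the signed sums zero, Lemma~\ref{lem:signed-cycle} closes each cycle. Translating the cycles apart keeps the realization injective.
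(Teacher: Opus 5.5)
Your proposal is correct and is essentially the paper's proof: cycles processed in the order of their top edges, lengths spaced by $\eta$, a special gap $g_t=-(x_t+\sum\varepsilon_i y_i)\in(0,2x_t]$ inserted before each top edge via Lemma~\ref{lem:one-dimensional-balancing}, and growth by at most a factor of $3$ per cycle. The slack you were unsure about comes only from the paper's cruder estimate $1+m\eta<2$, which gives $Y_j<2\cdot3^{j-1}$; your recursion $M_t\le 3M_{t-1}$ with $M_0=1+m\eta$ is sharper and shows that $K(k)=3^{k-1}+2$ would already suffice, once $\eta$ is small enough.
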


  \begin{proof}
    Fix an arbitrary total order on the edges of $kC_{K(k)}$. For brevity write $K=K(k)$ and $m=kK$, and denote the edges, in the prescribed order, by $e_1\prec e_2\prec\cdots\prec e_m$. Call the longest edge of a cycle its \emph{top edge}, and let the cycles be $D_1,\ldots,D_k$, relabelled so that the top edge of $D_1$ comes first in the order, then that of $D_2$, and so on. Write $t_j$ for the top edge of $D_j$.

    We assign the lengths from left to right. Choose $\eta>0$ with $\eta m<1$, give $e_1$ the length $\ell_1=1$, and let each successive length exceed the previous one by $\eta$. The only exception is the step into a top edge $t_j$, which is instead $\eta+g_j$ for a positive \emph{special gap} $g_j$, chosen so that $D_j$ admits a signing whose signed sum is zero. The gaps $g_1,\ldots,g_k$ are chosen in this order.

    Suppose $g_1,\ldots,g_{j-1}$ have been chosen. Every edge of $D_j$ other than $t_j$ has then received its final length, and we give $t_j$ the provisional length $Y_j$ that the step $\eta$ alone would produce. All steps up to $t_j$ equal $\eta$ apart from the earlier special gaps, and $\eta m<1$, so
    \begin{equation*}
      Y_j<2+g_1+\cdots+g_{j-1},
    \end{equation*}
    while every other edge of $D_j$ has length in $[1,Y_j]$.

    We show by induction on $j$ that
    \begin{equation*}
      Y_j<2\cdot3^{j-1}
      \qquad\text{and}\qquad
      g_j\leq 2Y_j .
    \end{equation*}
    The bound on $Y_j$ follows from the bounds already established for smaller indices, since
    \begin{equation*}
      Y_j<2+\sum_{i<j}g_i\leq2+\sum_{i<j}2Y_i<2+\sum_{i<j}4\cdot3^{i-1}=2\cdot3^{j-1},
    \end{equation*}
    the sum being empty when $j=1$. In particular $Y_j<2\cdot3^{k-1}<K-1$, so Lemma~\ref{lem:one-dimensional-balancing} applies to the $K-1$ lengths of $D_j$ other than $t_j$, all of which lie in $[1,Y_j]$, taking $Y=Y_j$ and $q=K-1$. It returns signs for those edges whose signed sum $\sigma_j$ satisfies
    \begin{equation*}
      -2Y_j\leq Y_j+\sigma_j<0 .
    \end{equation*}
    Setting $g_j=-(Y_j+\sigma_j)$ therefore gives a gap that is positive and at most $2Y_j$, completing the induction. Giving $t_j$ the sign $+1$ and its final length $Y_j+g_j$, the signed sum on $D_j$ becomes $(Y_j+g_j)+\sigma_j=0$, as required.

    Continuing from left to right produces increasing lengths $\ell_1<\ell_2<\cdots<\ell_m$ and, on every cycle, a signing whose signed sum is zero. Once the equation for $D_j$ has been obtained it is never disturbed, since every later special gap falls above $t_j$, whereas every edge of $D_j$ lies at or below it.

    Lemma~\ref{lem:signed-cycle} now realizes every cycle with the assigned lengths. Translating the components independently if necessary makes the placement injective, and the prescribed global order is realized.
  \end{proof}

  \begin{proof}[Proof of the lower bound in Theorem~\ref{thm:ex1}]
    Choose the largest integer $k$ such that
    \begin{equation*}
      kK(k)\leq n.
    \end{equation*}
    By Lemma~\ref{lem:equal-cycles-line-realizable}, the graph $kC_{K(k)}$ is realizable on the line. Connect its $k$ components by $k-1$ bridges. If $n-kK(k)$ vertices remain, attach them one at a time as leaves. By the bridge observation in the introduction, the resulting graph is still realizable on the line. It has $n$ vertices and
    \begin{equation*}
      kK(k)+(k-1)+(n-kK(k))=n+k-1
    \end{equation*}
    edges.

    By maximality of $k$,
    \begin{equation*}
      n<(k+1)K(k+1)=\Theta(k3^k).
    \end{equation*}
    It follows that
    \begin{equation*}
      k
      \geq
      \frac{\ln n}{\ln3}-O(\ln\ln n).
    \end{equation*}
    Consequently,
    \begin{equation*}
      \ex_{\R}(n)
      \geq n+k-1
      \geq
      n+\frac{\ln n}{\ln3}-O(\ln\ln n).
    \end{equation*}
  \end{proof}

  The gap between these two bounds is wide, and we do not know the true order of growth of $\ex_{\R}(n)-n$. The lower bound comes from disjoint unions of long cycles and is only logarithmic, whereas the upper bound is a crude arrangement-counting estimate.

  \begin{problem}
    Determine the order of growth of $\ex_{\R}(n)-n$. Is it bounded by a power of $\ln n$, or does it grow polynomially in $n$?
  \end{problem}

  \section{Extremal graphs in higher dimension}\label{sec:higher}

  The quantity $\ex_{\R^d}(n)$ defined in the introduction makes sense in every dimension, and Theorem~\ref{thm:ex1} is its case $d=1$. Here we determine it up to a lower order term: the counting argument of Theorem~\ref{thm:ex1} survives in higher dimensions, at the price of replacing hyperplanes by quadrics, and it is matched by a construction obtained by pasting simplices.

  \begin{theorem}\label{thm:exd}
    Let $d\geq1$. Then
    \begin{equation*}
      \binom{d}{2}+d(n-d)=dn-\binom{d+1}{2}
      \leq\ex_{\R^d}(n)\leq
      dn+(2+2\ln2)\,\frac{dn}{\ln (dn)}+O\!\left(\frac{dn}{(\ln (dn))^{2}}\right),
    \end{equation*}
    the implied constant being absolute. In particular $\ex_{\R^d}(n)=dn+O\!\left(n/\ln n\right)$ for every fixed $d$.
  \end{theorem}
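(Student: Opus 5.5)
The lower and upper bounds in Theorem~\ref{thm:exd} are proved separately: a construction by pasting simplices for the first, and a count of sign patterns of quadrics for the second.

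\emph{Lower bound.} Build $G_n$ inductively. Start with $K_d$, which has $\binom d2$ edges and is realizable in $\R^{d}$ since $R(K_d)\le d-2\le d$ for $d\ge 3$; the cases $d\le2$ are trivial. Then add the remaining $n-d$ vertices one at a time, joining each new vertex $v$ to exactly $d$ earlier vertices that span a $(d-1)$-simplex. This is the $d$-tree construction, and the edge count is $\binom d2+d(n-d)$. To show that every order is realizable, we induct on $n$: remove the last vertex $v$ and realize the rest. We then need to place $v$ so that its $d$ edge lengths fall into $d$ prescribed open gaps among the already fixed lengths.

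This placement step is the main obstacle. Placing $v$ with prescribed distances $r_1,\dots,r_d$ to $d$ affinely independent points requires a Cayley--Menger type compatibility condition, and the prescribed gaps may be tiny. I would get around this by not fixing the earlier lengths rigidly. Instead I would run the induction with a stronger hypothesis in which the realization has a slack parameter. The natural version is that all edges have lengths in $(1,1+\varepsilon)$, as in the $\varepsilon$-argument of the remark after Theorem~\ref{thm:twocycles}. If the base simplex is nearly regular, then any distances in $(1,1+\varepsilon)$ from the new vertex to its $d$ neighbours are achievable, with the new vertex near the apex of a nearly regular $d$-simplex. Distances near $1$ to the $d$ vertices of a nearly regular $(d-1)$-simplex of side near $1$ determine a point by the implicit function theorem, since the Jacobian is nonsingular at the regular configuration. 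To keep everything nearly regular, I would order the choices by first assigning all $m$ target lengths in $(1,1+\varepsilon)$ respecting $\prec$, and then realize the whole $d$-tree with exactly those lengths. For $\varepsilon$ small, each successive simplex is a perturbation of a regular one, and injectivity follows by reflecting each new apex to the appropriate side so that vertices do not coincide.

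\emph{Upper bound.} Repeat the counting in the proof of Theorem~\ref{thm:ex1}. A placement is a point $x\in\R^{dn}$. Each comparison $\norm{p_i-p_j}^2-\norm{p_k-p_\ell}^2$ is a single quadratic polynomial in $dn$ variables, giving $\binom m2$ polynomials of degree $2$. By the Warren / Milnor--Thom bound, the number of sign patterns of $s$ polynomials of degree at most $D$ in $N$ variables is at most $(4eDs/N)^N$. Applying this with $N=dn$, $D=2$ and $s\le m^2/2$ gives
\[
m!\le\left(\frac{4e\,m^2}{dn}\right)^{dn}.
\]
The count can be sharpened by quotienting out translations, but that only affects lower-order terms. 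Writing $m=\rho\,dn$ and using Stirling's formula, the threshold is $\rho=1+\delta$ with $\delta L\approx 2+\ln(4e)-1+\dots$, where $L=\ln(dn)$. Tracking the constants should produce $(2+2\ln 2)/L$ as the leading correction; the constant $2\ln 2$ comes from $\ln 4$ in the Warren bound. The same expansion as before then gives the $O(dn/\ln^2(dn))$ error term.
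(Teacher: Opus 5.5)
Your proposal follows the paper's proof on both halves. For the lower bound, the paper also fixes all target lengths $1+i\varepsilon$ in advance, realizes each nearly regular $K_{d+1}$ as a nondegenerate simplex, and glues it onto the already placed $d$-clique by an isometry, which is equivalent to your implicit-function placement of the apex. For the upper bound, it applies Warren's bound to the $\binom{m}{2}$ quadrics in $N=dn$ variables to get $m!\le(4em^2/N)^N$, and the Stirling expansion gives $\delta=(2+\ln 4)/\ln N+O(1/\ln^2 N)$, matching your constant.

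The one step to tighten is injectivity. In a general $d$-tree the nearly regular simplices can wind back close to earlier vertices, so choosing the side of each new apex does not by itself rule out coincidences. Finish instead, as the paper does, with a small generic perturbation of all vertices; this preserves every strict comparison because the target lengths are $\varepsilon$-separated.
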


  The lower bound will follow from a slightly more general simplex-gluing
  construction.  Recall that a \textit{$d$-tree} is a graph obtained from
  $K_{d+1}$ by repeatedly adding a new vertex adjacent to all the vertices
  of some $d$-clique already present.  Equivalently, it is obtained by
  gluing copies of $K_{d+1}$ along copies of $K_d$ in a tree-like fashion.

  \begin{proposition}\label{prop:dtree-realizable}
    Every linear order on the edges of a finite $d$-tree is realizable by
    distances of an injective configuration in $\R^d$.
  \end{proposition}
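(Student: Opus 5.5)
The plan is the $\varepsilon$-argument from the remark on $C_3\sqcup C_3$: make every edge length nearly equal, so that each building block of the $d$-tree is a nearly regular simplex, and then place the vertices one at a time along the construction sequence. Fix a $d$-tree $T$ together with its construction sequence: a starting clique $K_{d+1}$, followed by vertices $v_{d+2},\ldots,v_n$, where each $v_j$ is joined to a $d$-clique $Q_j$ already present. Fix a total order $e_1\prec\cdots\prec e_m$ on $E(T)$ and a small $\varepsilon>0$, to be chosen independently of $n$. Assign lengths $1<\ell_1<\cdots<\ell_m<1+\varepsilon$ respecting the order. We will realize these lengths exactly, so the induced order is the prescribed one.

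\emph{Key geometric fact.} There is $\varepsilon_d>0$ with the following property. Suppose $d$ points in $\R^d$ have all pairwise distances in $(1,1+\varepsilon_d)$, and $r_1,\ldots,r_d\in(1,1+\varepsilon_d)$. Then there are exactly two points, mirror images in the affine hyperplane spanned by the $d$ points, whose distance to the $i$-th point is $r_i$ for every $i$. Likewise, any $\binom{d+1}{2}$ numbers in $(1,1+\varepsilon_d)$ are the edge lengths of a nondegenerate simplex in $\R^d$.

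I would prove this by continuity from the regular case. For the simplex statement, the Cayley--Menger determinant is a continuous function of the squared lengths and is nonzero, with the correct sign, at the all-ones point; hence it keeps that sign on a neighbourhood, and a simplex with those lengths exists. For the first statement, write the apex coordinates relative to the base. Subtracting pairs of the sphere equations gives a linear system for the projection of the apex onto the base hyperplane, and this system is invertible because the base is nondegenerate. The squared height is then a continuous function of the data. In the regular case it equals $\frac{d+1}{2d}>0$, so it remains positive nearby. Equivalently, the $(d+1)$-point Cayley--Menger determinant stays positive, which is just the simplex statement again.

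\emph{Induction.} Take $\varepsilon<\varepsilon_d$. First place the initial $K_{d+1}$ using the simplex statement. At step $j$, the clique $Q_j$ is already placed, and its pairwise distances are exactly assigned lengths, so they lie in $(1,1+\varepsilon)$. The prescribed lengths of the $d$ new edges from $v_j$ also lie in this interval, so the key fact provides a position for $v_j$. The crucial point is that no error accumulates. Each step uses only the exact lengths of the current clique, and all of these lie in the fixed window $(1,1+\varepsilon)$, so the same $\varepsilon_d$ works at every step. The new clique formed by $v_j$ together with any $d-1$ vertices of $Q_j$ again has all distances in the window, so the hypothesis is preserved for later steps.

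\emph{Injectivity, the only delicate point.} The new vertex $v_j$ might coincide with a previously placed vertex that is not adjacent to it. It is automatically distinct from the vertices of $Q_j$, since it lies at distance greater than $1$ from each of them. To handle the remaining vertices, I would use the freedom in the lengths: the set of admissible length vectors is an open subset of $\R^m$. For each earlier vertex $w\notin Q_j$, the coincidence $p(v_j)=p(w)$ forces the $d$ new lengths at $v_j$ to equal the distances from $w$ to $Q_j$, which are determined by the earlier lengths. This is a proper closed condition, of positive codimension, in the new lengths. There are finitely many such conditions at each step, so a generic choice of lengths, made step by step or all at once, avoids every coincidence. Alternatively, at each step one of the two mirror choices, after an arbitrarily small perturbation of the new lengths within their order gaps, avoids the finitely many bad points. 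This yields an injective configuration in $\R^d$ realizing the order exactly, which proves Proposition~\ref{prop:dtree-realizable}.
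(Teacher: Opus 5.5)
Your proof is correct and follows the paper's route: nearly equal target lengths, openness of nondegenerate-simplex realizability around the regular simplex, and building along the construction sequence over already realized $d$-cliques. Placing the apex directly, as you do, is equivalent to the paper's step of realizing the new simplex separately and gluing it by an isometry. The only real difference is injectivity: the paper perturbs the \emph{points} generically, which preserves every strict comparison because the target lengths are separated by gaps, so your genericity argument on the lengths is valid but not needed.
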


  \begin{proof}
    Let
    \[
    e_1\prec e_2\prec\cdots\prec e_m
    \]
    be the prescribed order, and assign to $e_i$ the target length
    \[
    \ell(e_i)=1+i\varepsilon.
    \]
    We take $\varepsilon>0$ sufficiently small.  Indeed, the edge lengths
    of a regular $d$-simplex are all equal to $1$, and realizability by a
    nondegenerate $d$-simplex is an open condition on its edge lengths \cite{dekster1987edge}.
    Thus, for sufficiently small $\varepsilon$, the restriction of
    $\ell$ to every copy of $K_{d+1}$ occurring in the construction of
    the $d$-tree is realized by a nondegenerate simplex in $\R^d$.

    Realize one of these simplices first, and proceed along a construction
    tree.  Suppose that a new copy of $K_{d+1}$ is attached along a
    $d$-clique $C$ that has already been realized.  Realize the new
    $K_{d+1}$ independently with its prescribed edge lengths.  Its copy
    of $C$ has exactly the same pairwise distances as the copy already
    present, so the two are congruent.  Hence an isometry of $\R^d$
    identifies them.  Applying this isometry to the whole new simplex
    glues it to the existing configuration without changing any edge length that we have so far defined.

    Iterating this procedure yields a map
    \[
    p_0:V(G)\longrightarrow\R^d
    \]
    satisfying
    \[
    \|p_0(x)-p_0(y)\|=1+i\varepsilon
    \qquad\text{whenever }xy=e_i.
    \]
    Vertices lying in different simplices may accidentally have the same
    image, so $p_0$ need not be injective.  Since the prescribed edge
    lengths are separated by gaps of $\varepsilon$, however, a
    sufficiently small generic perturbation makes all vertex images
    distinct while preserving every strict comparison among edge lengths.
    The resulting configuration realizes the prescribed order.
  \end{proof}

  \begin{remark}\label{rem:partial-dtree-realizable}
    The requirement that consecutive simplices be glued along facets is
    not essential. The same conclusion holds if copies of $K_{d+1}$ are
    glued in a tree-like fashion along arbitrary complete subgraphs
    $K_r$, with $r\leq d$. The proof above applies without change: for the gluing phase we just need an isometry that identifies fewer vertices.
  \end{remark}

  The following immediate corollary gives the lower bound in
  Theorem~\ref{thm:exd}.

  \begin{corollary}\label{cor:snake-lower-bound}
    For every $n\geq d+1$, there is a graph on $n$ vertices with
    \[
    dn-\binom{d+1}{2}
    \]
    edges for which every linear order on the edges is realizable in
    $\R^d$.
  \end{corollary}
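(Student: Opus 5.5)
The plan is to exhibit an explicit $d$-tree on $n$ vertices and count its edges; Proposition~\ref{prop:dtree-realizable} then supplies realizability in $\R^d$.

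Start with $K_{d+1}$, which is itself a $d$-tree on $d+1$ vertices with $\binom{d+1}{2}$ edges. Then add the remaining $n-d-1$ vertices one at a time. Each new vertex is joined to all vertices of some $d$-clique already present. For concreteness, take the clique formed by the $d$ most recently added vertices, which gives a ``snake'' of simplices. Every step keeps the graph a $d$-tree by definition, and every step adds exactly one vertex and exactly $d$ edges.

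The edge count is therefore
\[
\binom{d+1}{2}+d(n-d-1).
\]
Since $\binom{d+1}{2}-d=\binom{d}{2}$, this equals $\binom{d}{2}+d(n-d)$. We also have $\binom{d}{2}+d(n-d)=dn-\binom{d+1}{2}$, because $d^2-\binom{d}{2}=\binom{d+1}{2}$. This matches the claimed count.

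Finally, Proposition~\ref{prop:dtree-realizable} says that every linear order on the edges of this $d$-tree is realizable by an injective configuration in $\R^d$, so $R(G)\le d$. The case $n=d+1$ is covered too: the graph is just $K_{d+1}$, with $\binom{d+1}{2}=d(d+1)-\binom{d+1}{2}$ edges. No step is a real obstacle, since the substance is already in Proposition~\ref{prop:dtree-realizable}. The only care needed is the binomial identity for the edge count.
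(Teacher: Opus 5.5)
Your proposal is correct and is essentially the paper's proof. You build a snake of $d$-simplices starting from $K_{d+1}$, observe that each of the $n-d-1$ added vertices contributes exactly $d$ edges to get $\binom{d+1}{2}+d(n-d-1)=dn-\binom{d+1}{2}$, and invoke Proposition~\ref{prop:dtree-realizable}; your checks of the binomial identity and of the case $n=d+1$ are also correct.
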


  \begin{proof}
    Take a $d$-tree whose simplex pieces form a path, that is, a \textit{snake} of
    $d$-simplices (Figure~\ref{fig:snake}).  Starting with $K_{d+1}$, every additional vertex
    contributes exactly $d$ new edges.  Hence the resulting graph has
    \[
    \binom{d+1}{2}+d(n-d-1)
    =dn-\binom{d+1}{2}
    \]
    edges, and Proposition~\ref{prop:dtree-realizable} applies.
  \end{proof}

  \begin{figure}[H]
    \centering
    \input{figuras/snake}
    \caption{The construction of Corollary~\ref{cor:snake-lower-bound}: a \emph{snake} of $d$-simplices, drawn for $d=2$ and $d=3$. One starts from a single copy of $K_{d+1}$, shaded darkest on the left, and repeatedly attaches a new vertex $v$ joined to all $d$ vertices of a $d$-clique of the most recent simplex; the \textcolor{intc}{$d$ edges} contributed by the last such step are highlighted. A snake on $n$ vertices has $dn-\binom{d+1}{2}$ edges, and by Proposition~\ref{prop:dtree-realizable} every total order on them is realizable in $\R^d$.}
    \label{fig:snake}
  \end{figure}

  There is another consequence of the same argument that we will use
  later on.

  \begin{corollary}\label{cor:disjoint-union-realizable}
    Every
    linear order on the edges of a disjoint union of copies of $K_{d+1}$
    is realizable in $\R^d$.
  \end{corollary}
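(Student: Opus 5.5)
The plan is to repeat the argument of Proposition~\ref{prop:dtree-realizable} almost verbatim. A disjoint union of copies of $K_{d+1}$ can be seen as a degenerate tree-like gluing, along the empty clique $K_0$, so the case $r=0$ of Remark~\ref{rem:partial-dtree-realizable}. I will still write out the steps directly, since gluing along nothing is even simpler than gluing along a facet.

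Let $e_1\prec\cdots\prec e_m$ be the prescribed order on all edges of the union, and assign target lengths $\ell(e_i)=1+i\varepsilon$. The regular $d$-simplex has all edge lengths equal to $1$, and realizability by a nondegenerate simplex in $\R^d$ is an open condition on the $\binom{d+1}{2}$ edge lengths (by the Cayley--Menger criterion, or \cite{dekster1987edge}). Hence for $\varepsilon$ small enough, the restriction of $\ell$ to each component is realized by a nondegenerate simplex in $\R^d$. The smallness is needed only for one fixed neighbourhood of the all-ones vector, and every component's lengths lie in $(1,1+m\varepsilon)$, so a single $\varepsilon$ works for all components at once.

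Next, realize each copy of $K_{d+1}$ separately, and translate the copies so that they lie far apart, say in pairwise disjoint balls. Translations preserve the lengths within each component. The components share no edges, so no further constraint arises, and the resulting placement is automatically injective: each simplex is nondegenerate, so its own vertices are distinct, and different components are separated.

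Finally, the edge lengths are exactly $1+i\varepsilon$, which are pairwise distinct and increase with $i$, so the induced order is $\prec$. The only real step is the openness of simplex realizability near the regular simplex. This is already used and cited in Proposition~\ref{prop:dtree-realizable}, so I expect no genuine obstacle. The remaining point to check is that one uniform $\varepsilon$ serves every component, and this follows from the fact that all target lengths lie in the same small window above $1$.
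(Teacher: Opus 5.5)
Your proposal is correct and is essentially the paper's proof: the paper simply invokes Remark~\ref{rem:partial-dtree-realizable} with $r=0$ (empty identifications), and you have written out that degenerate case of the argument of Proposition~\ref{prop:dtree-realizable} explicitly, including the uniform choice of $\varepsilon$ and the translation of the components apart. Your direct version also makes the final perturbation step unnecessary, since translating the nondegenerate simplices far apart already makes the placement injective.
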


  \begin{proof}
    Use Remark~\ref{rem:partial-dtree-realizable} with $r=0$, i.e., with empty identifications.
  \end{proof}

  For the upper bound we repeat the count of Theorem~\ref{thm:ex1}, with one difference. A placement is now a point of $\R^{dn}$, and the comparison of two edges $uv$ and $xy$ is governed by
  \begin{equation*}
    Q_{uv,xy}(p)=\norm{p(u)-p(v)}^{2}-\norm{p(x)-p(y)}^{2},
  \end{equation*}
  a quadratic polynomial which for $d\geq2$ no longer factors into linear forms. The regions are therefore cut out by quadrics rather than by hyperplanes, and Zaslavsky's theorem has to give way to the Milnor--Thom bound, in the form due to Warren \cite{WARREN1968}: a family of $M\geq N$ quadratic polynomials in $N$ variables leaves at most
  \begin{equation}\label{eq:warren}
    \left(\frac{8eM}{N}\right)^{\!N}
  \end{equation}
  connected components on which all of them are nonzero. See \cite[Chapter~6]{MAT2002} or \cite[Sections~7.3 and~7.6]{BPR2006} for this and its relatives.

  \begin{proof}[Proof of the upper bound in Theorem~\ref{thm:exd}]
    Let $G$ have $n$ vertices and $m$ edges, with $R(G)\leq d$, and set $N=dn$. There are $M=\binom{m}{2}$ polynomials $Q_{uv,xy}$, and none of them vanishes identically, since distinct edges give distinct squared-length forms. A placement has pairwise distinct edge lengths exactly when all of them are nonzero, and the induced order is then determined by their signs, which are constant on each connected component. Hence the number of orders realizable by $G$ in $\R^{d}$ is at most \eqref{eq:warren}. As before we may assume $m\geq N$, so that $M\geq N$ and \eqref{eq:warren} applies, and every one of the $m!$ orders must be realizable, whence
    \begin{equation}\label{eq:warren-count-bound}
      m!\leq\left(\frac{8e\binom{m}{2}}{N}\right)^{\!N}\leq\left(\frac{4em^{2}}{N}\right)^{\!N}.
    \end{equation}
    Comparing \eqref{eq:warren-count-bound} with $m!\geq(m/e)^{m}$ forces $m=O(N)$. Writing $m=\rho N$ and taking logarithms, Stirling's formula then turns \eqref{eq:warren-count-bound} into the inequality $\Psi_N(\rho)\leq O(\ln N/N)$, where
    \begin{equation*}
      \Psi_N(\rho)=(\rho-1)\ln N+(\rho-2)\ln\rho-\rho-1-\ln4 .
    \end{equation*}
    This is the function $\Phi_n$ of Theorem~\ref{thm:ex1} with $n$ replaced by $N$ and the constant term lowered by $\ln4$, so the computation made there applies verbatim: the threshold has $\rho=1+\delta$ with $\delta\to0$, and expanding at $\delta=0$ gives
    \begin{equation*}
      \Psi_N(1+\delta)=\delta L-(2+\ln4)-2\delta+\tfrac32\delta^{2}+O(\delta^{3}),
      \qquad L=\ln N,
    \end{equation*}
    whose zero satisfies $\delta=\frac{2+\ln4}{L}+O\!\left(\frac1{L^{2}}\right)$. Therefore $m\leq N+(2+\ln4)N/L+O(N/L^{2})$, which is the stated bound.
  \end{proof}

  \begin{remark}
    Nothing above depends on $d$ except through $N=dn$, so the upper bound holds uniformly in $d$. For $d=1$ it gives the shape of Theorem~\ref{thm:ex1} with the worse constant $2+2\ln2\approx3.39$ in place of $2$, since the quadric count does not use the factorization available on the line. The loss is in the constant and not in the order of growth. The lower bound of Theorem~\ref{thm:exd} is also weaker on the line, where it only gives $\ex_{\R}(n)\geq n-1$, against the $n+\ln n/\ln 3-O(\ln\ln n)$ of Theorem~\ref{thm:ex1}. Both bounds of Theorem~\ref{thm:ex1} are thus sharper than what the general argument yields for $d=1$.
  \end{remark}

  \begin{remark}\label{rmk:algorithmic}
    The polynomials $Q_{uv,xy}$ can also be handled algorithmically, and not only counted. There are exact algorithms which, given a finite family of polynomials, compute a representative point in each of its nonempty sign conditions \cite[Chapter~13]{BPR2006}, \cite{SAFEY2006}. Sorting the $m$ edge lengths at each representative reads off the corresponding order, so for every fixed $d$ one can decide exactly which of the $m!$ orders a given graph realizes in $\R^{d}$, and in particular compute $R(G)$. The cost is enormous and we make no attempt to estimate it. On the line the situation is far better, since there the $Q_{uv,xy}$ factor into linear forms and their sign conditions are the regions of a hyperplane arrangement, hence convex cones. This is what made the count of Theorem~\ref{thm:ex1} possible in the first place, and it also allows the regions to be enumerated by elementary means, giving the concrete algorithm mentioned in the introduction.
  \end{remark}

  \section{The exact value of $R(K_n\sqcup K_n)$}\label{sec:order}

  As we saw in Section~\ref{sec:cyclesR}, some orders on the edges of two disjoint triangles cannot be realized in $\R$. On the other hand, by \cite{AM22} every total order on the edges of $K_n$ is realizable in $\R^{n-2}$, so it is natural to ask whether the same holds for $K_n\sqcup K_n$. In this section and the next we prove that it does not, by exhibiting an explicit family of orders that cannot be realized, and that a single extra dimension repairs the failure.

  \begin{theorem}\label{thm:KnUKn}
    For every $n\geq3$,
    \begin{equation*}
      R(K_n\sqcup K_n)=n-1.
    \end{equation*}
    That is, every total order on the edges of $K_n\sqcup K_n$ is realizable in $\R^{n-1}$, while some total orders are not realizable in $\R^{n-2}$.
  \end{theorem}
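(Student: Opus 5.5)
The proof has two halves: an $\varepsilon$-perturbation construction for the upper bound, and a spread argument based on Schütte's theorem for the lower bound.

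\emph{Upper bound.} This is the $\varepsilon$-argument from the remark on $C_3\sqcup C_3$, lifted to dimension $n-1$. Fix any total order on the edges of $K_n\sqcup K_n$ and list them as $e_1\prec\cdots\prec e_{2\binom n2}$. Assign $e_i$ the length $1+i\varepsilon$. A regular $(n-1)$-simplex in $\R^{n-1}$ has all edges of length $1$, and realizability by a nondegenerate simplex is an open condition on the edge lengths \cite{dekster1987edge}. So for $\varepsilon$ small, each copy of $K_n$ with its prescribed lengths is realized by a nondegenerate simplex in $\R^{n-1}$. This is exactly Corollary~\ref{cor:disjoint-union-realizable} with $d=n-1$. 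Translating one copy far from the other makes the placement injective and realizes the order.

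\emph{Lower bound.} The plan is to build an order in which the realization of one component is forced to have tiny spread, and then contradict Lemma~\ref{lem:spread}. That lemma is Schütte's bound: for $n$ points in $\R^{n-2}$, the ratio of the largest to the smallest pairwise distance is at least some constant $\lambda_n>1$. Write the two components as $A$ and $B$. I would interleave them so that every edge of $A$ lies strictly between two edges of $B$ whose lengths are forced to be nearly equal. Concretely, I would try the order
\begin{equation*}
  b_1\prec a_1\prec\cdots\prec a_{\binom n2}\prec b_2\prec b_3\prec\cdots\prec b_{\binom n2}.
\end{equation*}
Then all distances of $A$ lie in $(|b_1|,|b_2|)$, so the spread of $A$ is less than $|b_2|/|b_1|$. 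Schütte then forces $|b_2|/|b_1|>\lambda_n$. Symmetrically, I want the order inside $B$ to force $b_1$ and $b_2$ to be close. To achieve this I would make $B$'s own structure rigid by sandwiching $B$'s edges between $A$'s, in a nested alternating pattern such as
\begin{equation*}
  a_1\prec b_1\prec a_2\prec\cdots.
\end{equation*}
Since both components live in $\R^{n-2}$, both spreads must exceed $\lambda_n$. The goal is to choose the interleaving so that the largest-to-smallest ratio of one component is bounded above by a ratio of the other that the interleaving forces to be less than $\lambda_n$. The cleanest version I would aim for places all of $A$ inside a window between two consecutive edges of $B$, and all of $B$ except that window edge inside a window between two consecutive edges of $A$. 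Then
\begin{equation*}
  \mathrm{spread}(A)<\frac{|b_{i+1}|}{|b_i|}\le \mathrm{spread}(B)\quad\text{and}\quad \mathrm{spread}(B)\text{ is bounded by a ratio inside }A.
\end{equation*}
This chain should yield a contradiction, or at least a circular inequality of the form $x<y<x$.

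\emph{Main obstacle.} The hard step is designing an order where these sandwichings are mutually consistent, since each component's spread cannot be confined by the other simultaneously. If the naive nesting fails, I would first try using Schütte quantitatively rather than as a pure sign argument. One option is to require the $n$ smallest edges of $B$ to form a configuration whose near-minimal distances pin down a simplex-like shape, for instance a spanning star or a triangle. Another is to combine Schütte on $A$ with the triangle inequality in $B$ to force $|b_{\max}|\le 2|b_{\min}|$-type bounds that conflict with the windows. This fine tuning of the order, so that Lemma~\ref{lem:spread} bites on one component while the other supplies the narrow window, is where I expect the real work to be.
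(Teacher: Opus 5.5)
Your upper bound is correct and is exactly the paper's argument (Corollary~\ref{cor:disjoint-union-realizable} with $d=n-1$). The lower bound, however, has a genuine gap, and comparing the spread of one component against the spread of the other cannot close it.

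First, your ``cleanest version'' is not a consistent order. If every edge of $A$ lies strictly between consecutive edges $b_i\prec b_{i+1}$ of $B$, then every other edge of $B$ lies below or above all of $A$, so none of them can sit between two edges of $A$.

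More fundamentally, Lemma~\ref{lem:spread} only gives \emph{lower} bounds $\lambda_n=\theta_n^{-1/2}$ on the ratio largest/smallest of a whole component. It says nothing about $n-1$ points, which can be equidistant in $\R^{n-2}$. Such lower bounds are compatible with every order: for $e_1\prec\cdots\prec e_m$, the abstract lengths $\ell(e_i)=C^i$ with $C$ large respect the order and give both components spread at least $C^2$. So no interleaving can produce a contradiction unless you also have an inequality bounding some ratio inside a component from \emph{above}, with a constant that matches $\lambda_n$ exactly.

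The triangle inequality does not supply this. It only gives a factor $2$; for instance, an internal edge is at most twice the longest cross edge. Since $\lambda_n<2$ for $n\geq4$, every order admits abstract lengths that lie within a factor $2$ on each component (so all triangle inequalities hold) and give both spreads at least $\lambda_n$. Only for $n=3$, where $\lambda_3=2$, is the triangle inequality enough.

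The missing idea is the dimension-free Lemma~\ref{lem:gap}, combined with a block order. Use the paper's notation: $G,H$ are the two copies, and $A,B$ are the two parts of $G$, of sizes $a=\lfloor n/2\rfloor$ and $b=\lceil n/2\rceil$. Take any order that lists all cross edges of $G$ first, then all edges of $H$, then all internal edges of $G$. Let $s$ be the longest cross edge and $\cL$ the shortest internal edge. Translate so that $\E[A]=0$ and average over independent uniform points. In any dimension this gives
$s^2\geq\E[\norm{A-B}^2]=\E[\norm{A}^2]+\E[\norm{B}^2]\geq\tfrac12\E[\norm{A-A'}^2]+\tfrac12\E[\norm{B-B'}^2]\geq\theta_{a,b}\cL^2=\theta_n\cL^2$.

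Only $H$ uses the dimension hypothesis, through Lemma~\ref{lem:spread}. The two constants agree: both are attained by two concentric regular simplices in orthogonal subspaces. With $S,\ell$ the shortest and longest edges of $H$, you then get $\theta_n\cL^2\leq s^2<S^2\leq\theta_n\ell^2<\theta_n\cL^2$, a contradiction.
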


  The first half is Corollary~\ref{cor:disjoint-union-realizable}. The orders witnessing the second half are constructed as follows. Denote the two copies of $K_n$ by $G$ and $H$, and split the vertex set of $G$ into two parts $A$ and $B$ with $|A|=\lfloor n/2\rfloor$ and $|B|=\lceil n/2\rceil$. We call an edge of $G$ \emph{internal} if both of its endpoints lie in the same part, and \emph{cross} otherwise. Let $\prec$ be any total order on the edges of $K_n\sqcup K_n$ such that
  \begin{equation}\label{eq:blocks}
    e \prec f \prec e'
  \end{equation}
  whenever $e$ is a cross edge of $G$, $f$ is an edge of $H$, and $e'$ is an internal edge of $G$. In other words, the edges come in three blocks: first all the cross edges of $G$, then all the edges of $H$, and finally all the internal edges of $G$. Within each block the order may be refined arbitrarily (see Figure~\ref{fig:order}).

  \begin{figure}[H]
    \centering
    \input{figuras/order}
    \caption{The order for $n=5$. The six \textcolor{crossc}{cross edges} of $G$ form the shortest block, the ten \textcolor{hc}{edges of $H$} come next, and the four \textcolor{intc}{internal edges} of $G$ form the longest block. Within each block the order is arbitrary. The four lengths $s$, $S$, $\ell$ and $\cL$ used in the proof of Theorem~\ref{thm:KnUKn} are marked below the axis.}
    \label{fig:order}
  \end{figure}

  We claim that no order satisfying \eqref{eq:blocks} is realizable in $\R^{n-2}$. The idea of the proof is simple. On one hand, any $n$ points in $\R^{n-2}$ have their shortest and longest distances \emph{somewhat far apart}. On the other hand, in a point set split into two parts as $G$ is, the distances within a part \emph{cannot be too far} from the distances between the parts. The order \eqref{eq:blocks} makes these two facts incompatible.

  To make this idea precise, suppose that some placement of the vertices of $K_n\sqcup K_n$ in $\R^{n-2}$ realizes such an order, and consider the following four edge lengths, marked in Figure~\ref{fig:order}: the longest cross edge $s$ of $G$, the shortest and longest edges $S$ and $\ell$ of $H$, and the shortest internal edge $\cL$ of $G$ ($s$ for small and $\ell$ for large). The order forces
  \begin{equation}\label{eq:sandwich}
    s < S < \ell < \cL.
  \end{equation}
  In the next section we prove that there is a constant $0<\theta_n<1$, depending only on $n$, such that $\theta_n\,\cL^2\leq s^2$ holds in every dimension (Lemma~\ref{lem:gap}: the two parts of $G$ cannot be pulled apart from each other too much), while $S^2\leq\theta_n\,\ell^2$ holds for any $n$ points in $\R^{n-2}$ (Lemma~\ref{lem:spread}, a theorem of Schütte~\cite{SCHUTTE1963}: in dimension $n-2$ the edges of $H$ cannot be nearly equal). These two bounds are incompatible with \eqref{eq:sandwich}, and the second half of Theorem~\ref{thm:KnUKn} follows.

  \subsection*{Probabilistic notation}

  The proof is best phrased in elementary probabilistic language, which makes the computations short and transparent. If $P=\{P_1,\dots,P_k\}$ is a finite set of points in $\R^d$, we also write $P$ for the random point that takes each value $P_i$ with probability $1/k$. This slight abuse of notation causes no confusion in practice. More generally, given a probability vector $\alpha=(\alpha_1,\dots,\alpha_k)$, that is, a vector with $\alpha_i\geq 0$ for all $i$ and $\sum_i\alpha_i=1$, we write $P_\alpha$ for the random point that takes the value $P_i$ with probability $\alpha_i$. In this language the convex hull of $P$ is exactly the set of points $\E[P_\alpha]$, as $\alpha$ ranges over all probability vectors.

  Distinct random points appearing together are always assumed to be independent, unless explicitly stated otherwise. In particular, $X'$ denotes an independent copy of the random point $X$, i.e., a random point independent of $X$ and with the same distribution. Since random points are simply random vectors, expressions such as $A-B$, the dot product $A\bdot B$, and $\norm{A-B}^2$ are ordinary random variables. For instance, if $A$ and $B$ are uniform on sets of sizes $a$ and $b$, then $A-B$ takes the value $A_i-B_j$ with probability $\frac{1}{ab}$ for each pair $(i,j)$. The expectation $\E[A]$ of a random point is itself a point of $\R^d$. When $A$ is uniform on a finite set, $\E[A]$ is the centroid of that set.

  We will use the following well-known properties.

  \begin{observation}\label{obs:identities}
    Let $X$ and $Y$ be independent random points in $\R^d$, each taking finitely many values, and let $\lambda\in\R$. Then:
    \begin{enumerate}
      \item $\E[X+\lambda Y] = \E[X]+\lambda\,\E[Y]$,
      \item $\E[X\bdot Y] = \E[X]\bdot\E[Y]$,
      \item $\E[\norm{X-Y}^2] = \E[\norm{X}^2]+\E[\norm{Y}^2]-2\,\E[X]\bdot\E[Y]$,
      \item $\E[\norm{X-X'}^2] = 2\left(\E[\norm{X}^2]-\norm{\E[X]}^2\right)$.
    \end{enumerate}
  \end{observation}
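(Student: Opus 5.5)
These are standard facts about independent finitely supported random vectors, and I will prove them directly from linearity of expectation and independence, in the order stated, each one using the previous ones.

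For (1), I will expand the expectation coordinatewise. $\E[X+\lambda Y]$ is a finite sum over the joint distribution. Since the joint law of $(X,Y)$ has marginals the laws of $X$ and $Y$, the sum splits as $\E[X]+\lambda\E[Y]$. Independence is not even needed here.

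For (2), I will write $X\bdot Y=\sum_{c}X^{(c)}Y^{(c)}$ over coordinates $c$. Independence gives $\Pr(X=x,Y=y)=\Pr(X=x)\Pr(Y=y)$, so each $\E[X^{(c)}Y^{(c)}]$ factors as $\E[X^{(c)}]\E[Y^{(c)}]$. Summing over $c$ and using (1) coordinatewise yields $\E[X]\bdot\E[Y]$.

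For (3), I will expand $\norm{X-Y}^2=\norm{X}^2+\norm{Y}^2-2X\bdot Y$ pointwise. Then I take expectations using linearity, which is (1) applied to real random variables, and replace $\E[X\bdot Y]$ by $\E[X]\bdot\E[Y]$ via (2).

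For (4), I will apply (3) with $Y=X'$, which is legitimate since $X'$ is independent of $X$ by definition. Because $X'$ has the same distribution as $X$, we get $\E[\norm{X'}^2]=\E[\norm{X}^2]$ and $\E[X']=\E[X]$. The formula then collapses to $2\E[\norm{X}^2]-2\norm{\E[X]}^2$.

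No step is a real obstacle. The only point needing care is remembering where independence is used: it is used in (2), and hence in (3) and (4), but not in (1). In (4) the independence of the copy $X'$ is exactly what makes the cross term equal $\norm{\E[X]}^2$ rather than $\E[\norm{X}^2]$.
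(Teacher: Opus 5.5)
Your proposal is correct and follows the paper's argument: the paper treats (1) and (2) as well known, derives (3) by expanding $\norm{X-Y}^2$ and applying (1) and (2), and gets (4) as the case $Y=X'$ of (3). Your account of where independence enters (only in (2), and hence in (3) and (4)) is also right.
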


  Indeed, (3) follows by expanding $\norm{X-Y}^2$ and applying (1) and (2), and (4) is the case $Y=X'$ of (3), which simply states that $\E[\norm{X-X'}^2]$ is twice the variance of $X$.

  In particular, if $X$ is translated so that $\E[X]=0$, then (4) reads $\E[\norm{X-X'}^2]=2\,\E[\norm{X}^2]$. If moreover $\E[Y]=0$, then (3) reads $\E[\norm{X-Y}^2]=\E[\norm{X}^2]+\E[\norm{Y}^2]$.

  We will also use the following standard consequence of the rearrangement inequality.

  \begin{observation}\label{obs:collision}
    Let $X$ be a random point taking at most $k$ distinct values. Then $\Pr[X=X']\geq\frac{1}{k}$.
  \end{observation}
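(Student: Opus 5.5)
Write $v_1,\dots,v_m$ ($m\le k$) for the distinct values that $X$ takes, and $p_i=\Pr[X=v_i]$. The plan is to express the collision probability directly in terms of these masses and then bound a sum of squares from below.

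First, since $X$ and $X'$ are independent with the same distribution, the event $\{X=X'\}$ is the disjoint union of the events $\{X=v_i,\,X'=v_i\}$ over $i$. This gives
\begin{equation*}
  \Pr[X=X']=\sum_{i=1}^{m}\Pr[X=v_i]\,\Pr[X'=v_i]=\sum_{i=1}^{m}p_i^2 .
\end{equation*}
Disjointness uses only that the $v_i$ are distinct.

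Next we bound $\sum_i p_i^2$ from below using only $\sum_i p_i=1$. By Cauchy--Schwarz applied to the vectors $(p_1,\dots,p_m)$ and $(1,\dots,1)$,
\begin{equation*}
  1=\Bigl(\sum_{i=1}^m p_i\Bigr)^2\le m\sum_{i=1}^m p_i^2 .
\end{equation*}
Hence $\Pr[X=X']\ge 1/m\ge 1/k$.

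This is the ``rearrangement'' form mentioned in the statement. Equivalently, $\sum_i p_i^2=\sum_i p_i\cdot p_i$ pairs the sequence with itself, and by the rearrangement or Chebyshev sum inequality it is at least $\frac1m\bigl(\sum_i p_i\bigr)^2$. Either route works, and Cauchy--Schwarz is the cleanest.

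None of the steps is a real obstacle. The only point needing care is the first one: the decomposition of $\{X=X'\}$ must be by the values of $X$, and it relies on independence to factor each term. No continuity issues arise, because $X$ is finitely supported.
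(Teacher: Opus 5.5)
Your proof is correct and follows the paper's argument: both write $\Pr[X=X']=\sum_i p_i^2$ and then show $1=\bigl(\sum_i p_i\bigr)^2\le k\sum_i p_i^2$. The only difference is how that last inequality is justified. You use Cauchy--Schwarz, while the paper sums the rearrangement inequality $\sum_i p_ip_{\sigma(i)}\le\sum_i p_i^2$ over the $k$ cyclic shifts $\sigma$, which is an immaterial variation.
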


  Indeed, if $p_1,\ldots,p_k$ are the probabilities of the values, then $\Pr[X=X']=\sum_i p_i^2$, and the rearrangement inequality gives $\sum_i p_ip_{\sigma(i)}\leq\sum_i p_i^2$ for every permutation $\sigma$. Summing this over the $k$ cyclic shifts yields $1=\bigl(\sum_i p_i\bigr)^2\leq k\sum_i p_i^2$.

  \section{Proof of Theorem~\ref{thm:KnUKn}}\label{sec:proof}

  For integers $a,b\geq 1$ with $a+b\geq 3$, define
  \begin{equation*}
    \theta_{a,b} = \frac{\binom{a}{2}}{a^2}+\frac{\binom{b}{2}}{b^2} = 1-\frac{1}{2a}-\frac{1}{2b},
  \end{equation*}
  and for $n\geq 3$ set $\theta_n = \theta_{\lfloor n/2\rfloor,\lceil n/2\rceil}$.

  \begin{observation}\label{obs:theta}
    We have $0<\theta_{a,b}<1$. Moreover, $\theta_{a,b}$ is increasing in each of $a$ and $b$ and, among all $a,b$ with $a+b=n$, it is largest when $\{a,b\}=\{\lfloor n/2\rfloor,\lceil n/2\rceil\}$. In particular, $\theta_{a,b}\leq \theta_n$ whenever $a+b=n$.
  \end{observation}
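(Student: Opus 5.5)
The claim is elementary, and I will prove it directly from the closed form $\theta_{a,b}=1-\frac{1}{2a}-\frac{1}{2b}$. First I check the stated identity: $\binom{a}{2}/a^2=\frac{a-1}{2a}=\frac12-\frac1{2a}$, and similarly for $b$. Adding the two gives the formula.

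For the bounds, $\theta_{a,b}<1$ is immediate because $a,b\geq1$ make both subtracted terms positive. For positivity, note that $\frac1{2a}+\frac1{2b}\leq 1$ always, with equality only when $a=b=1$. That case is excluded by $a+b\geq3$. More concretely, if say $a=1$ then $b\geq2$ and $\theta_{a,b}=\frac12-\frac1{2b}>0$; if both $a,b\geq2$ then $\theta_{a,b}\geq 1-\frac14-\frac14>0$.

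Monotonicity in each variable holds because $-\frac1{2a}$ is increasing in $a$.

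For the maximization with $a+b=n$ fixed, I minimize $f(a)=\frac1a+\frac1{n-a}$ over integers $1\leq a\leq n-1$. My approach is to write
\[
f(a)=\frac{n}{a(n-a)},
\]
so it suffices to maximize the product $a(n-a)$. This product is a concave quadratic symmetric about $n/2$, so over the integers it is maximized at $a\in\{\lfloor n/2\rfloor,\lceil n/2\rceil\}$. Both choices give the same unordered pair, and $\theta$ is symmetric in $a$ and $b$. Hence $\theta_{a,b}\leq\theta_n$.

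There is no real obstacle. The only point needing care is confirming that the integer maximizer of $a(n-a)$ is the balanced split, which follows from $a(n-a)=\frac{n^2}{4}-\bigl(a-\frac n2\bigr)^2$.
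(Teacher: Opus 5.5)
Your proposal is correct and follows essentially the same route as the paper: read everything off the closed form $1-\frac{1}{2a}-\frac{1}{2b}$, and for fixed $a+b=n$ rewrite the subtracted terms as $\frac{n}{2ab}$, so that maximizing $\theta_{a,b}$ means maximizing $ab$, which happens at the balanced split. You additionally spell out the positivity check, which uses $a+b\geq 3$ to exclude $a=b=1$, and the completed-square argument for the integer maximizer; the paper leaves both implicit.
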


  All of this is immediate from the second expression, since when $a+b=n$ we have $\frac{1}{2a}+\frac{1}{2b}=\frac{n}{2ab}$, and $ab$ is largest when the parts are balanced.

  For small $n$ we have $\theta_3=\frac{1}{4}$, $\theta_4=\frac{1}{2}$, $\theta_5=\frac{7}{12}$ and $\theta_6=\frac{2}{3}$. As $n\to\infty$, $\theta_n$ increases to $1$.

  The proof of Theorem~\ref{thm:KnUKn} rests on two lemmas. The first says, in any dimension, how much longer than the cross distances the internal distances of a two-part point set can be.

  \begin{lemma}\label{lem:gap}
    Let $A$ and $B$ be disjoint finite sets of points in $\R^d$ with $|A|=a$ and $|B|=b$, where $a+b\geq 3$. Let $s$ be the largest distance between a point of $A$ and a point of $B$, and let $\cL$ be the smallest distance between two distinct points lying in the same set. Then
    \begin{equation*}
      \theta_{a,b}\, \cL^2 \leq s^2.
    \end{equation*}
  \end{lemma}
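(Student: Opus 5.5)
The plan is to average rather than look at individual distances, using the probabilistic notation just introduced. Write $A$ and $B$ also for the random points uniformly distributed on the two sets, taken independent, and let $A'$, $B'$ be independent copies. Every value of $\norm{A-B}$ is a cross distance, so
\begin{equation*}
  \E\bigl[\norm{A-B}^2\bigr]\leq s^2 .
\end{equation*}
It therefore suffices to bound $\E[\norm{A-B}^2]$ from below by $\theta_{a,b}\,\cL^2$.

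\textbf{Step 1: reduce to within-part spreads.} Let $\alpha=\E[A]$ and $\beta=\E[B]$ be the centroids. Translating each part to its centroid and applying Observation~\ref{obs:identities}(3) to $A-\alpha$ and $B-\beta$ gives
\begin{equation*}
  \E\bigl[\norm{A-B}^2\bigr]
  =\E\bigl[\norm{A-\alpha}^2\bigr]+\E\bigl[\norm{B-\beta}^2\bigr]+\norm{\alpha-\beta}^2 .
\end{equation*}
This is the standard variance decomposition: the cross terms vanish because both centred variables have mean zero. We drop the nonnegative term $\norm{\alpha-\beta}^2$. Observation~\ref{obs:identities}(4) then turns each remaining variance into half the mean squared distance between two independent copies, so
\begin{equation*}
  \E\bigl[\norm{A-B}^2\bigr]\geq \tfrac12\E\bigl[\norm{A-A'}^2\bigr]+\tfrac12\E\bigl[\norm{B-B'}^2\bigr].
\end{equation*}

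\textbf{Step 2: bound each spread by $\cL$.} Whenever $A\neq A'$, the two values are distinct points of the same set, so $\norm{A-A'}\geq\cL$. Since the $a$ points are distinct and $A$ is uniform on them, $\Pr[A=A']=1/a$. Hence
\begin{equation*}
  \E\bigl[\norm{A-A'}^2\bigr]\geq\Bigl(1-\frac1a\Bigr)\cL^2 ,
\end{equation*}
and similarly $\E[\norm{B-B'}^2]\geq(1-\tfrac1b)\cL^2$. Combining the two bounds,
\begin{equation*}
  s^2\geq\E\bigl[\norm{A-B}^2\bigr]\geq\Bigl(1-\frac1{2a}-\frac1{2b}\Bigr)\cL^2=\theta_{a,b}\,\cL^2 .
\end{equation*}

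\textbf{Degenerate cases.} The hypothesis $a+b\geq3$ guarantees that some part has two points, so $\cL$ is defined. If $a=1$, the corresponding spread is $0$, and the bound $(1-1/a)\cL^2=0$ still holds. I do not expect a real obstacle; the one point to get right is the variance decomposition in Step~1, together with the observation that dropping the centroid distance $\norm{\alpha-\beta}^2$ is exactly what makes the bound independent of how far apart the parts are placed.
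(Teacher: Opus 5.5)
Your proof is correct and is essentially the paper's argument. The paper translates so that $\E[A]=0$ and then discards $\norm{\E[B]}^2$, which is exactly your dropped term $\norm{\alpha-\beta}^2$; it then bounds the two spreads via $\Pr[A\neq A']=1-\frac{1}{a}$ just as you do.
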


  Since $\theta_{a,b}<1$, the lemma is compatible with configurations in which every internal distance is larger than every cross distance. What it says is that the internal distances can exceed the largest cross distance by a factor of at most $\theta_{a,b}^{-1/2}$. Note that no assumption is made on the dimension $d$.

  Like Lemma~\ref{lem:spread}, this is essentially known. Write $c(X)$ for the circumradius of a finite set $X$, that is, the radius of its smallest enclosing ball. Maehara~\cite[Lemma~1]{MAEHARA1991} shows that $c(A)^2+c(B)^2\leq s^2$, while~\cite[Corollary~2]{MAEHARA1985}, applied to $A$ and to $B$ after rescaling, gives $c(A)\geq\cL\sqrt{(a-1)/(2a)}$ and $c(B)\geq\cL\sqrt{(b-1)/(2b)}$. Since
  \begin{equation*}
    \frac{a-1}{2a}+\frac{b-1}{2b}=\theta_{a,b},
  \end{equation*}
  the two combine to give Lemma~\ref{lem:gap}. In the balanced case $a=b=n$ this is~\cite[Theorem~5]{MAEHARA1991}, stated there as the assertion that $K_{n,n}$ is not $\delta$-embeddable once $n>1/(1-\delta^2)$. We keep the short proof below because it reaches the constant in a single averaging step, with the centroid in place of the circumcentre, and because it runs parallel to the proof of Lemma~\ref{lem:spread}.

  \begin{proof}
    Regard $A$ and $B$ as independent uniform random points and translate the configuration so that $\E[A]=0$. Then
    \begin{align*}
      s^2 &\geq \E[\norm{A-B}^2] && \text{(the maximum is at least the mean)}\\
      &= \E[\norm{A}^2]+\E[\norm{B}^2] \\
      &\geq \tfrac{1}{2}\,\E[\norm{A-A'}^2]+\tfrac{1}{2}\,\E[\norm{B-B'}^2] && \text{(discarding $\norm{\E[B]}^2\geq 0$)}\\
      &\geq \theta_{a,b}\,\cL^2.
    \end{align*}
    For the last inequality: whenever $A\neq A'$, we are picking distinct points of the same set, so $\norm{A-A'}\geq\cL$ by the definition of $\cL$, and since $A$ is uniform on $a$ points, $\Pr[A\neq A']=\frac{2\binom{a}{2}}{a^2}$ (out of $a^2$ equally likely ordered pairs, exactly $2\binom{a}{2}$ consist of distinct points). Hence $\tfrac{1}{2}\E[\norm{A-A'}^2]\geq\frac{\binom{a}{2}}{a^2}\,\cL^2$ and, similarly, $\tfrac{1}{2}\E[\norm{B-B'}^2]\geq\frac{\binom{b}{2}}{b^2}\,\cL^2$. The two bounds add up to $\theta_{a,b}\,\cL^2$.
  \end{proof}

  \begin{remark}
    Lemma~\ref{lem:gap} is sharp, even in $\R^{n-2}$. Let $A$ and $B$ be regular simplices with a common edge length $\cL$, centred at the same point and spanning orthogonal subspaces of dimensions $a-1$ and $b-1$, so that the whole configuration lies in $\R^{n-2}$. Every internal distance then equals $\cL$, while every cross distance equals $s$, where, writing $r_A=\cL\sqrt{(a-1)/(2a)}$ and $r_B=\cL\sqrt{(b-1)/(2b)}$ for the two circumradii,
    \begin{equation*}
      s^2=r_A^2+r_B^2=\Bigl(\tfrac{a-1}{2a}+\tfrac{b-1}{2b}\Bigr)\cL^2=\theta_{a,b}\,\cL^2.
    \end{equation*}
    So the two parts can be pulled apart until $\theta_{a,b}\,\cL^2=s^2$, and $\theta_{a,b}$ is exactly the right constant.
  \end{remark}

  The second lemma is where the dimension comes in: in $\R^{n-2}$, a set of $n$ points cannot be nearly equidistant.

  \begin{lemma}[Schütte \cite{SCHUTTE1963}]\label{lem:spread}
    Let $n\geq 3$ and let $P$ be a set of $n$ points in $\R^{n-2}$. If $S$ is the smallest and $\ell$ is the largest distance between two distinct points of $P$, then
    \begin{equation*}
      S^2 \leq \theta_n\, \ell^2.
    \end{equation*}
  \end{lemma}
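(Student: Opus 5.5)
The plan is to run the averaging argument of Lemma~\ref{lem:gap} in reverse, with the roles of the two kinds of distances exchanged. In Lemma~\ref{lem:gap} the cross distances were bounded above and the internal distances below. Here all distances lie in $[S,\ell]$, so I will bound the cross distances below by $S$ and the internal distances above by $\ell$. The dimension enters only through Radon's theorem.

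\emph{Step 1 (Radon partition).} Since $n\geq (n-2)+2$, Radon's theorem applied to the $n$ points of $P$ in $\R^{n-2}$ gives disjoint nonempty sets $A,B\subseteq P$ whose convex hulls intersect. Write $a=|A|$ and $b=|B|$, so that $a+b\leq n$. We cannot have $a=b=1$, since two distinct points have disjoint hulls, and therefore $a+b\geq 3$ and $\theta_{a,b}$ is defined. In the probabilistic language, there are probability vectors $\alpha$ on $A$ and $\beta$ on $B$ with $\E[A_\alpha]=\E[B_\beta]$. Translate so that this common point is the origin.

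\emph{Step 2 (cross term).} Take $A_\alpha$ and $B_\beta$ independent. By Observation~\ref{obs:identities}(3) with both means zero, $\E[\norm{A_\alpha-B_\beta}^2]=\E[\norm{A_\alpha}^2]+\E[\norm{B_\beta}^2]$. Because $A$ and $B$ are disjoint, $A_\alpha\neq B_\beta$ always holds, so every value of $\norm{A_\alpha-B_\beta}$ is a distance between two distinct points of $P$. Hence the left side is at least $S^2$.

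\emph{Step 3 (internal terms).} By Observation~\ref{obs:identities}(4) with mean zero, $2\E[\norm{A_\alpha}^2]=\E[\norm{A_\alpha-A_\alpha'}^2]$. This difference vanishes when $A_\alpha=A_\alpha'$ and is otherwise a distance within $P$, hence at most $\ell$. Therefore
\[
2\E[\norm{A_\alpha}^2]\leq \Pr[A_\alpha\neq A_\alpha']\,\ell^2\leq\Bigl(1-\tfrac1a\Bigr)\ell^2 ,
\]
where the last step uses Observation~\ref{obs:collision}, since $A_\alpha$ takes at most $a$ values. The same argument gives $2\E[\norm{B_\beta}^2]\leq(1-\frac1b)\ell^2$.

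\emph{Step 4 (combine).} Adding the bounds of Steps~2 and~3 yields
\[
S^2\leq \Bigl(\tfrac{a-1}{2a}+\tfrac{b-1}{2b}\Bigr)\ell^2=\theta_{a,b}\,\ell^2 .
\]
By Observation~\ref{obs:theta}, $\theta_{a,b}$ is increasing in each argument and is maximized at balanced parts for a fixed sum. Since $a+b\leq n$, we get $\theta_{a,b}\leq\theta_n$, which proves the lemma.

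The main obstacle is Step~1: recognizing that the nonuniform weights supplied by Radon's theorem are harmless. The uniform-weight computation of Lemma~\ref{lem:gap} fails here, and what rescues it is the collision bound of Observation~\ref{obs:collision}, which holds for arbitrary weights. A secondary point to check is that Radon's theorem may leave some points of $P$ unused. This only lowers $a+b$, which the monotonicity of $\theta_{a,b}$ absorbs.
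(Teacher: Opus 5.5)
Your proposal is correct and is essentially the paper's own argument, Bárány's Radon-partition averaging. It runs through the same chain $S^2\leq\E[\norm{A_\alpha-B_\beta}^2]=\tfrac12\E[\norm{A_\alpha-A_\alpha'}^2]+\tfrac12\E[\norm{B_\beta-B_\beta'}^2]\leq\theta_{a,b}\,\ell^2\leq\theta_n\,\ell^2$, with the collision bound handling the nonuniform weights. The only cosmetic difference is that you allow the Radon sets to miss some points and absorb this by monotonicity of $\theta_{a,b}$. The paper instead takes a genuine partition $P=A\sqcup B$ with $a+b=n$, which Radon's theorem provides directly since $|P|=(n-2)+2$.
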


  Note that Lemma~\ref{lem:spread} is a statement about arbitrary point sets, no order is involved. Since $\theta_n<1$, it gives a quantitative version of the classical fact that a set of pairwise equidistant points in $\R^d$ has at most $d+1$ elements: for $n$ points in $\R^{n-2}$, the ratio $\ell/S$ is at least $\theta_n^{-1/2}>1$.

  Schütte states the result in the equivalent form of the minimal diameter of a set of $n$ points in $\R^{n-2}$ whose mutual distances are all at least $1$. In our notation that diameter is $\theta_n^{-1/2}$, so that the lemma reads
  \begin{equation*}
    \frac{\ell^2}{S^2}
    \;\geq\;
    \theta_n^{-1}
    =
    \begin{cases}
      1+\dfrac{2}{n-2}, & n\text{ even},\\[2.5mm]
      1+\dfrac{2n}{n(n-2)-1}, & n\text{ odd}.
    \end{cases}
  \end{equation*}
  Schütte proves more than the inequality: he shows that equality holds exactly for the configurations appearing in the remark after Lemma~\ref{lem:gap}, namely two regular simplices of the same edge length, with a common centre and spanning orthogonal subspaces, whose numbers of vertices are $\lfloor n/2\rfloor$ and $\lceil n/2\rceil$. In particular $\theta_n$ is the optimal constant. Further proofs have been given by Schoenberg~\cite{SCHOENBERG1969}, Seidel~\cite{SEIDEL1969} and Bárány~\cite{BARANY1994}. The short argument we give below is essentially Bárány's~\cite{BARANY1994}, as presented in~\cite{SWANEPOEL2014}, in the probabilistic phrasing of Section~\ref{sec:order}. Swanepoel adapts it there to the $4$-norm.

  \begin{proof}
    By Radon's theorem \cite{RADON1921} (see also \cite{MAT2002}) there is a partition $P=A\sqcup B$ into nonempty parts whose convex hulls intersect. Writing $|A|=a$ and $|B|=b$, where $a+b=n$, this means that there are probability vectors $\alpha$ and $\beta$ such that
    \begin{equation*}
      \E[A_\alpha] = \E[B_\beta].
    \end{equation*}
    Translate the configuration so that this common point is the origin, and consider $A_\alpha$ and $B_\beta$. Then $\E[A_\alpha]=\E[B_\beta]=0$, and
    \begin{align*}
      S^2 &\leq \E[\norm{A_\alpha-B_\beta}^2] && \text{(the minimum is at most the mean)}\\
      &= \E[\norm{A_\alpha}^2]+\E[\norm{B_\beta}^2] \\
      &= \tfrac{1}{2}\,\E[\norm{A_\alpha-A_\alpha'}^2]+\tfrac{1}{2}\,\E[\norm{B_\beta-B_\beta'}^2] \\
      &\leq \tfrac{1}{2}\big(\Pr[A_\alpha\neq A_\alpha']+\Pr[B_\beta\neq B_\beta']\big)\,\ell^2 && \text{($\norm{A_\alpha-A_\alpha'}$ is either $0$ or a distance in $P$)}\\
      &\leq \theta_{a,b}\, \ell^2 \;\leq\; \theta_n\, \ell^2.
    \end{align*}
    For the last line, $A_\alpha$ takes at most $a$ distinct values, so $\Pr[A_\alpha\neq A_\alpha']\leq 1-\frac{1}{a}$ by Observation~\ref{obs:collision}, and similarly $\Pr[B_\beta\neq B_\beta']\leq 1-\frac{1}{b}$. The final inequality is Observation~\ref{obs:theta}.
  \end{proof}

  We can now prove the main theorem.

  \begin{proof}[Proof of Theorem~\ref{thm:KnUKn}]
    By Corollary~\ref{cor:disjoint-union-realizable}, every total order on the edges of $K_n\sqcup K_n$ is realizable in $\R^{n-1}$.

    We turn to the other half. Let $\prec$ be any total order satisfying \eqref{eq:blocks}, and suppose for the sake of contradiction that some placement of the vertices of $K_n\sqcup K_n$ in $\R^{n-2}$ realizes $\prec$. Identify each vertex with its image, so that the parts $A$ and $B$ of $G$ become disjoint point sets in $\R^{n-2}$ and $H$ becomes a set of $n$ points in $\R^{n-2}$, and let $s$, $S$, $\ell$ and $\cL$ be the four lengths of Section~\ref{sec:order}. Applying Lemma~\ref{lem:gap} to $A$ and $B$, applying Lemma~\ref{lem:spread} to the $n$ points of $H$, and combining with \eqref{eq:sandwich}, we obtain
    \begin{equation*}
      \theta_n\, \cL^2 \;\leq\; s^2 \;<\; S^2 \;\leq\; \theta_n\, \ell^2 \;<\; \theta_n\, \cL^2,
    \end{equation*}
    which is a contradiction. No order satisfying \eqref{eq:blocks} is therefore realizable in $\R^{n-2}$, and $R(K_n\sqcup K_n)>n-2$.
  \end{proof}

  The second half of the proof produces a family of bad orders: writing $a=\lfloor n/2\rfloor$ and $b=\lceil n/2\rceil$, the orders satisfying \eqref{eq:blocks} number
  \begin{equation*}
    (ab)!\;\binom{n}{2}!\;\Bigl(\binom{a}{2}+\binom{b}{2}\Bigr)!,
  \end{equation*}
  out of the $\bigl(2\binom{n}{2}\bigr)!$ total orders on the edges of $K_n\sqcup K_n$.

  \begin{remark}
    Nothing in the argument uses that there are only two copies: the first half applies verbatim to any number of them, while the second already rules out $\R^{n-2}$ as soon as two are present. Hence $R(tK_n)=n-1$ for every $t\geq2$.
  \end{remark}

  \section*{Acknowledgments}

  The authors, particularly Gerardo and Miguel, would like to thank the organizers of ``Escuela Queretana de Matemáticas 2026''. We also thank Crisanto Salazar, who took part in this project during that school and contributed the idea of contracting the $\prec$-smallest edge (Observation~\ref{obs:easy}).

  This work was supported by UNAM-PAPIIT project IN119026 and by UNAM-PAPIIT project IN114726.  The first author received financial support by the UNAM-DGAPA Postdoctoral Fellowship Program. The second author received support from UNAM DGAPA-PASPA to work on this topic during a sabbatical stay at the Instituto de Matemáticas, Unidad Juriquilla, UNAM.

  The authors acknowledge the use of Claude, developed by Anthropic, to assist with the drafting and reorganization of parts of the manuscript, including the figures, to improve the clarity and presentation of the exposition, and to help implement simple algorithms used during the exploratory stages of the work. All mathematical content, computational output relevant to the final results, and final formulations were reviewed and verified by the authors, who approve the resulting manuscript and take full responsibility for its contents.

  \bibliographystyle{elsarticle-num}
  \bibliography{refs}

\end{document}

%% file: figuras/triangles-disjoint.tex
\begin{tikzpicture}[scale=0.9,lbl/.style={fill=white,inner sep=1.5pt}]
  \coordinate (Yl) at (0,0);
  \coordinate (Yr) at (2.4,0);
  \coordinate (Yt) at (1.2,1.8);
  \draw[thick] (Yl) -- node[lbl] {$6$} (Yr);
  \draw[thick] (Yl) -- node[lbl] {$1$} (Yt);
  \draw[thick] (Yr) -- node[lbl] {$2$} (Yt);
  \foreach \p in {Yl,Yr,Yt}{\node[vtx] at (\p) {};}
  \begin{scope}[shift={(2.8,0)}]
    \coordinate (Xl) at (0,0);
    \coordinate (Xr) at (2.4,0);
    \coordinate (Xt) at (1.2,1.8);
    \draw[thick] (Xl) -- node[lbl] {$5$} (Xr);
    \draw[thick] (Xl) -- node[lbl] {$3$} (Xt);
    \draw[thick] (Xr) -- node[lbl] {$4$} (Xt);
    \foreach \p in {Xl,Xr,Xt}{\node[vtx] at (\p) {};}
  \end{scope}
\end{tikzpicture}

%% file: figuras/triangles-shared.tex
\begin{tikzpicture}[scale=0.9,lbl/.style={fill=white,inner sep=1.5pt}]
  \coordinate (T) at (0,1.4);
  \coordinate (B) at (0,-1.4);
  \coordinate (L) at (-2,0);
  \coordinate (R) at (2,0);
  \draw[thick] (T) -- node[lbl] {$5$} (B);   
  \draw[thick] (L) -- node[lbl] {$1$} (T);
  \draw[thick] (L) -- node[lbl] {$2$} (B);
  \draw[thick] (R) -- node[lbl] {$3$} (T);
  \draw[thick] (R) -- node[lbl] {$4$} (B);
  \foreach \p in {T,B,L,R}{\node[vtx] at (\p) {};}
\end{tikzpicture}

%% file: figuras/snake.tex
\begin{tikzpicture}[
    oldE/.style={black!70,thick},
    newE/.style={intc,very thick,line cap=round},
    face/.style={fill=black!8},
    seedface/.style={fill=hc!35},
    newface/.style={fill=intc!12},
    plabel/.style={font=\footnotesize},
  ]

  \begin{scope}[shift={(0,-0.693)}]
    \coordinate (p0) at (0.00,0.000);
    \coordinate (p1) at (0.80,1.386);
    \coordinate (p2) at (1.60,0.000);
    \coordinate (p3) at (2.40,1.386);
    \coordinate (p4) at (3.20,0.000);
    \coordinate (p5) at (4.00,1.386);
    \coordinate (p6) at (4.80,0.000);

    \fill[seedface] (p0)--(p1)--(p2)--cycle;
    \fill[face]     (p1)--(p2)--(p3)--cycle;
    \fill[face]     (p2)--(p3)--(p4)--cycle;
    \fill[face]     (p3)--(p4)--(p5)--cycle;
    \fill[newface]  (p4)--(p5)--(p6)--cycle;

    \draw[oldE] (p0)--(p1) (p0)--(p2) (p1)--(p2)
                (p1)--(p3) (p2)--(p3)
                (p2)--(p4) (p3)--(p4)
                (p3)--(p5) (p4)--(p5);
    \draw[newE] (p4)--(p6) (p5)--(p6);

    \foreach \i in {0,...,5}{\node[vtx] at (p\i) {};}
    \node[vtx,fill=intc] at (p6) {};
    \node[intc,plabel,right=2pt] at (p6) {$v$};
  \end{scope}
  \node[plabel,align=center] at (2.40,-1.72)
    {$d=2$: a strip of triangles\\[-1pt] $2n-3$ edges};

  %
  \begin{scope}[shift={(8.0,0.174)}]
    \coordinate (q0) at (-1.584, 0.902);
    \coordinate (q1) at (-1.406,-1.263);
    \coordinate (q2) at (-0.509, 0.915);
    \coordinate (q3) at ( 0.691,-0.194);
    \coordinate (q4) at ( 0.768,-1.263);
    \coordinate (q5) at ( 2.039, 0.902);

    \filldraw[fill=intc!13,draw=black!70,thick,line join=round] (q2)--(q4)--(q5)--cycle;
    \draw[newE] (q2)--(q5) (q4)--(q5);
    \filldraw[fill=black!14,draw=black!70,thick,line join=round] (q1)--(q2)--(q4)--cycle;
    \filldraw[fill=intc!17,draw=black!70,thick,line join=round] (q3)--(q4)--(q5)--cycle;
    \draw[newE] (q3)--(q5) (q4)--(q5);
    \filldraw[fill=intc!17,draw=black!70,thick,line join=round] (q2)--(q3)--(q5)--cycle;
    \draw[newE] (q2)--(q5) (q3)--(q5);
    \filldraw[fill=black!14,draw=black!70,thick,line join=round] (q1)--(q3)--(q4)--cycle;
    \filldraw[fill=hc!38,draw=black!70,thick,line join=round] (q0)--(q1)--(q2)--cycle;
    \filldraw[fill=hc!54,draw=black!70,thick,line join=round] (q0)--(q2)--(q3)--cycle;
    \filldraw[fill=hc!38,draw=black!70,thick,line join=round] (q0)--(q1)--(q3)--cycle;

    \foreach \i in {0,...,4}{\node[vtx] at (q\i) {};}
    \node[vtx,fill=intc] at (q5) {};
    \node[intc,plabel,right=2pt] at (q5) {$v$};
  \end{scope}
  \node[plabel,align=center] at (8.0,-1.72)
    {$d=3$: a snake of tetrahedra\\[-1pt] $3n-6$ edges};

\end{tikzpicture}

%% file: figuras/order.tex
\begin{tikzpicture}
  \coordinate (a1) at (0,0.7);
  \coordinate (a2) at (0,-0.7);
  \coordinate (b1) at (2.6,1.0);
  \coordinate (b2) at (3.0,0);
  \coordinate (b3) at (2.6,-1.0);
  \foreach \x in {a1,a2}{\foreach \y in {b1,b2,b3}{\draw[crossE] (\x)--(\y);}}
  \draw[intE] (a1)--(a2);
  \draw[intE] (b1)--(b2); \draw[intE] (b2)--(b3); \draw[intE] (b1)--(b3);
  \draw[black!60] (0,0) ellipse (0.5 and 1.3);
  \draw[black!60] (2.75,0) ellipse (0.85 and 1.55);
  \node at (0,1.6) {$A$};
  \node at (2.75,1.9) {$B$};
  \foreach \p in {a1,a2,b1,b2,b3}{\node[vtx] at (\p) {};}
  \node at (1.4,-1.95) {$G$};
  \begin{scope}[shift={(6.3,0)}]
    \foreach \i in {0,...,4}{\coordinate (h\i) at ({90+72*\i}:1.25);}
    \foreach \i in {0,...,4}{\foreach \j in {0,...,4}{\draw[hE] (h\i)--(h\j);}}
    \foreach \i in {0,...,4}{\node[vtx] at (h\i) {};}
    \node at (0,-1.95) {$H$};
  \end{scope}
  \begin{scope}[shift={(8.6,0)}]
    \draw[->] (0,0) -- (5.5,0) node[below left=1pt,font=\footnotesize] {length};
    \draw[crossc,line width=5pt] (0.2,0) -- (1.6,0);
    \draw[hc,line width=5pt] (2.0,0) -- (3.4,0);
    \draw[intc,line width=5pt] (3.8,0) -- (5.2,0);
    \node[crossc,above,align=center,font=\footnotesize] at (0.9,0.15) {cross\\ edges of $G$};
    \node[hc,above,align=center,font=\footnotesize] at (2.7,0.15) {all edges\\ of $H$};
    \node[intc,above,align=center,font=\footnotesize] at (4.5,0.15) {internal\\ edges of $G$};
    \node[crossc,below,font=\footnotesize] at (1.6,-0.12) {$s$};
    \node[hc,below,font=\footnotesize] at (2.0,-0.12) {$S$};
    \node[hc,below,font=\footnotesize] at (3.4,-0.12) {$\ell$};
    \node[intc,below,font=\footnotesize] at (3.8,-0.12) {$\cL$};
  \end{scope}
\end{tikzpicture}